\documentclass[11pt]{amsart}

\usepackage{amssymb}	
\usepackage{tikz}
\usepackage{tikz-cd}

\usepackage{amsmath}
 
\usepackage{mathrsfs}
\usepackage{amsthm}
\usepackage{verbatim}
\usepackage{subeqnarray}
 
\usepackage{color}
\usepackage{float}
\usepackage{bm}
\usepackage{hyperref}
\usepackage{amsfonts}
\usepackage{amscd}
\usepackage{cases}
\usepackage{xcolor}

\definecolor{red}{rgb}{0.8,0,0}
\definecolor{darkorange}{rgb}{1,0.4,0}
\definecolor{lightorange}{rgb}{1,0.6, 0}
\definecolor{yellow}{rgb}{1,0.8, 0}

\setcounter{footnote}{6}

\newtheorem{theorem}{Theorem}
\newtheorem{remark}{Remark}

\newtheorem{lemma}{Lemma}

\newcommand\tr{\operatorname{tr}}
\newcommand\inc{\operatorname{inc}}
\newcommand\skw{\operatorname{skw}}
\newcommand\sskw{\operatorname{sskw}}
\newcommand\vskw{\operatorname{vskw}}
\newcommand\mskw{\operatorname{mskw}}

\newcommand\sym{\operatorname{sym}}
\newcommand\grad{\operatorname{grad}}
\newcommand\deff{\operatorname{def}}
\renewcommand\div{\operatorname{div}}
\renewcommand\ker{\mathcal{N}}

\newcommand\curl{\operatorname{curl}}
\newcommand\rot{\operatorname{rot}}

\newcommand\ein{\operatorname{ein}}

\newcommand\hess{\operatorname{hess}}

\newcommand\ran{\mathcal{R}}

\newcommand\K{\mathbb{K}}
\newcommand\M{\mathbb{M}}

\renewcommand\S{{\mathbb S}}

\newcommand\R{\mathbb{R}}

\newcommand\x{\times}

\newcommand\V{{\mathbb{V}}}

% allows for temporary adjustment of side margins
\usepackage{chngpage}

% provides filler text
\usepackage{lipsum}

% just makes the table prettier (see \toprule, \bottomrule, etc. commands below)
\usepackage{booktabs}

\newcommand{\bs}{{\scriptscriptstyle \bullet}}

\makeatletter
\@addtoreset{equation}{section}
\makeatother

\setlength{\topmargin}{-0.5in}
\setlength{\textheight}{9.5in}
\setlength{\textwidth}{5.85in}
\setlength{\oddsidemargin}{0.325in}
\setlength{\evensidemargin}{0.325in}
\setlength{\marginparwidth}{1.0in}

\usepackage{geometry}
\geometry{left=3.5cm,right=3.5cm,top=3.5cm,bottom=3.5cm}

%\usepackage[numbers,sort&compress]{natbib}
   %     \setlength{\bibsep}{-0.5ex}  % vertical spacing between references

%\pagestyle{fancy} \lhead{} \rhead{}

%\author[K. Hu]{Kaibo Hu}
%\address{School of Mathematics, University of Minnesota,
% 206 Church St. SE,
%Minneapolis, MN 55455-0488,
%USA}
%\email{khu@umn.edu}

\begin{document}
\title{Nonlinear elasticity complex and a finite element diagram chase}

\begin{abstract}
In this paper, we present a nonlinear version of the linear elasticity (Calabi, Kr\"oner, Riemannian deformation) complex which encodes isometric embedding, metric, curvature and the Bianchi identity. We reformulate the rigidity theorem and a fundamental theorem of Riemannian geometry as the exactness of this complex. Then we generalize an algebraic approach for constructing finite elements for the Bernstein-Gelfand-Gelfand (BGG) complexes. In particular, we discuss the reduction of degrees of freedom with injective connecting maps in the BGG diagrams. We derive a strain complex in two space dimensions with a diagram chase. 
\end{abstract}

\author{Kaibo Hu}
 \address{Mathematical Institute,
University of Oxford, Andrew Wiles Building, Radcliffe Observatory Quarter, 
Oxford, OX2 6GG, UK}
\email{Kaibo.Hu@maths.ox.ac.uk}
\date{\today. Manuscript prepared for proceedings of the INdAM conference ``Approximation Theory and Numerical Analysis meet Algebra, Geometry, Topology'', which was held in September 2022 at Cortona, Italy.}
\maketitle

\section{Introduction}

A complex refers to a sequence of spaces $V^{k}, k=0, \cdots, n$ and operators $d^{k}: V^{k}\to V^{k+1}, k=0, \cdots, n-1$, such that $d^{k+1}\circ d^{k}=0$ for any $k$. In this paper, we focus on differential complexes, where $d^{\bs}$ are differential operators. Differential complexes play an important role in geometry, analysis and numerical partial differential equations \cite{arnold2018finite,Arnold.D;Falk.R;Winther.R.2006a,Arnold.D;Falk.R;Winther.R.2010a,arnold2021complexes,bru1992hilbert,vcap2001bernstein}. Finite element exterior calculus (FEEC) \cite{arnold2018finite,Arnold.D;Falk.R;Winther.R.2006a,Arnold.D;Falk.R;Winther.R.2010a} was inspired by finite element methods for electromagnetic and elasticity problems. A key idea of FEEC is to discretize problems with spaces that fit in complexes. For electromagneitc problems, the de~Rham complex encodes the key structures of the continuous problem; while for elasticity problems,  the elasticity (Calabi, Kr\"oner, Riemannian deformation) complex \cite{Arnold2006a,eastwood2000complex,eastwood1999variations,pauly2020elasticity} plays a similar role. 

Most existing literature focuses on complexes with linear differential operators. With linear operators, cohomology can be defined and carries rich information (for example, finite dimensional cohomology further implies various analytic results \cite{arnold2021complexes}). In some cases, e.g., the elasticity complex, the operators are obtained as a linearization of nonlinear versions. The space and nonlinear operators also form a complex.   Nevertheless, these original complexes with nonlinear operators (referred to as {\it nonlinear complexes}   below) have received less attention. 

It is of interest to investigate the nonlinear complexes, as they
\begin{enumerate}
\item  incorporate important structures for nonlinear models and PDEs;
\item provide a motivation for the linear versions;
\item  may play a role in clarifying some problems in numerical analysis of nonlinear problems (e.g., the stability/instability of different choices of finite elements for nonlinear elasticity \cite{angoshtari2017compatible,auricchio2013approximation}).
\end{enumerate}
Moreover, concepts are more clear in the context of nonlinear elasticity, such as pullbacks and frame indifference. 
In the first part of this paper, we present the nonlinear version of the elasticity complex and show that the exactness of this complex corresponds to rigidity and a fundamental theorem of Riemannian geometry \cite{ciarlet2013linear} (thus clarifying (1) and (2) above).
  
In the second part of the paper, we generalize an approach for constructing finite element versions of the linear elasticity complex (the strain and stress complexes in two space dimensions). The elasticity complex is a special case of the Bernstein-Gelfand-Gelfand (BGG) sequences \cite{arnold2021complexes,vcap2022bgg,vcap2001bernstein,eastwood1999variations}. The BGG machinery establishes a link between de~Rham complexes and more complicated ones, e.g., the elasticity complex \cite{arnold2021complexes,vcap2022bgg}. Specifically, one connects several copies of de~Rham complexes by algebraic connecting maps and eliminates some components from the diagram.
 On the discrete level, one may mimic the construction and fit finite element spaces in diagrams. We will refer to this approach of deriving more finite elements from de~Rham complexes with various kinds of regularity as a {\it finite element diagram chase}.
 The main difficulty of this approach is to construct finite element de~Rham complexes that fulfil the requirement, as the input complexes should have different kinds of regularity. This can be seen in the simplest case, where one connects two 1D de~Rham complexes and get a BGG complex with a second order operator (see  Figure \ref{fig:1Ddiagram} and Figure \ref{fig:1DBGG}).   In higher dimensions, the diagram chase requires an input of more delicate finite element de~Rham complexes. 
\begin{figure}[H]
\begin{center}
\includegraphics[width=1.5in]{./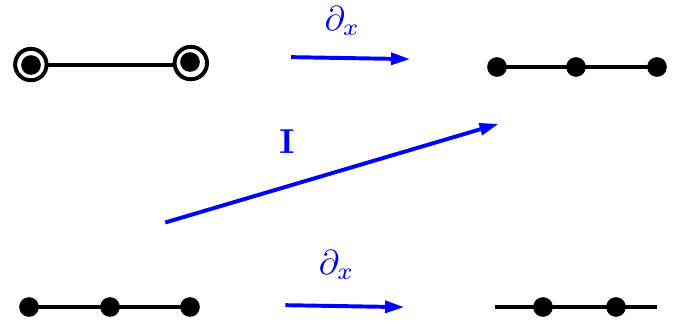} 
\end{center}
\caption{BGG diagram for deriving a complex with a second order operator. The two rows are finite element de~Rham complexes with different kinds of regularity.}\label{fig:1Ddiagram}
 \end{figure}
 \begin{figure}[H]
\begin{center}
\includegraphics[width=1.5in]{./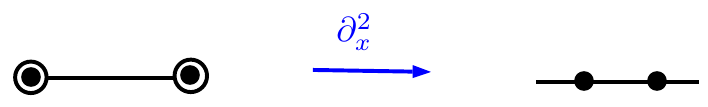}
\end{center}
\caption{The derived BGG complex with a second order operator.}
\label{fig:1DBGG}
 \end{figure}

% On the discrete level, one can mimic this approach.  In particular, we may input several de~Rham complexes and generate a BGG complexes, e.g., the elasticity complex. The approach in \cite{arnold2021complexes} for deriving BGG complexes on the continuous level is to collect several copies of de~Rham complexes and eliminate some components. Therefore a natural approach to deriving discrete BGG complexes is to mimic the construction and fit finite element spaces in diagrams. 
 
 %There are some conditions for the input complexes. In simple cases, we input two de~Rham complexes and the two complexes should have different kinds of regularity. This can be seen in the simplest case, where one connects two 1D de~Rham complexes \noindent to get a complex with a second order operator:

% Another difficulty of this approach is the reduction of degrees of freedom. Specifically, when we have two matched complexes as input, the derived {spaces} in the discrete BGG complex immediately follow from the algebraic structures as kernels or cokernels of the connecting maps. Nevertheless, in the context of finite elements, the degrees of freedom are not obvious.

A finite element diagram chase was first used in \cite{Arnold2006a} as a re-interpretation of the Arnold-Winther elasticity element. Recently, this approach has been extended to other complexes and finite elements
\cite{christiansen2020discrete,christiansen2018nodal,christiansen2022finite}.  In applications related to the Hellinger-Reissner principle (e.g., \cite{Arnold2006a}), the connecting maps are surjective. In this case, one may define a discrete version of the connecting map by rewriting the degrees of freedom \cite{Arnold2006a}. In other examples, the connecting maps can be injective. The reduction of spaces and degrees of freedom is more subtle. A finite element elasticity complex on the Alfeld split in 3D was constructed in \cite{christiansen2020discrete}. The first connecting map $\mskw$ is injective.  
A discrete BGG diagram plays an important role in \cite{christiansen2020discrete}. Nevertheless, the strain space was derived directly, rather than following a systematic diagram chase. 

In this paper, we demonstrate the reduction of spaces and degrees of freedom by closely following the BGG diagrams. We use the 2D strain complex as an example. The derived space is not symmetric but is so in a discrete sense (see Section \ref{sec:diagram-chase} for more detailed explanations). We also discuss a stress complex on quadrilateral grids. 

Discretizing the entire BGG diagrams is not merely a way to derive discrete BGG sequences, but it may have the benefit of incorporating more physics. With each BGG diagram, we can gather spaces from all rows into a {\it twisted complex} where the operators have an off-diagonal part mixing the components from different rows \cite{arnold2021complexes,vcap2022bgg}.  The twisted complexes encode more information than the BGG sequences, as the latter can be obtained by eliminating some spaces from the former (and the main conclusion of the BGG machinery is that this elimination is cohomology-preserving). This elimination has a physical meaning. For example, for the elasticity complex, the Hodge-Laplacian problem of the twisted complex corresponds to a linear Cosserat model which involves both displacement and a pointwise rotation. Then to derive the elasticity complex, one eliminates the rotational degree of freedom and obtains standard elasticity with only displacement as the main unknown. In \cite{vcap2022bgg}, it was observed that such a correspondence exists in 2D for the Reissner-Mindlin plate and the Kirchhoff-Love plate, and in 3D for the Cosserat and standard elasticity. Here we further observe that the Hodge-Laplacian problems for the 1D twisted and BGG complexes (continuous versions of Figure \ref{fig:1Ddiagram} and Figure \ref{fig:1DBGG}), with the energy functionals $\|\partial_{x}u-w\|_{A}^{2}+\|\partial_{x}w\|_{B}^{2}$ and $\|\partial_{x}^{2}w\|^{2}_{C}$,  correspond to the linearized Timoshenko beam and Euler-Bernoulli beam, respectively.  Here $u$ is the displacement; $w$ is the deflection; $\|\cdot\|_{\bs}$ are weighted norms. Therefore deriving a discrete version of the entire diagram may also shed a light on these problems. 

Differential complexes also provide a connection between scalar splines and tensor-valued spaces with relatively lower regularity. In particular, exactness of a complex $(V^{\bs}, d^{\bs})$ gives an equality of dimensions $\sum_{k}(-1)^{k}\dim V^{k}=0$. This connection also inspires the construction of tensor-valued finite elements. Specifically, one may also start with a smooth scalar spline space in a complex, differentiate it and obtain tensor-valued spaces that complete the sequence. Table \ref{tab:connections} summarizes some scalar and tensor-valued spaces and complexes that connect them. 

 %(see Table \ref{tab:connections} and Figure \ref{fig:JM} for a specific example). 

The rest of the paper is organized as the following.  In Section \ref{sec:nonlinear-complex}, we discuss a complex with nonlinear differential operators. In Section \ref{sec:diagram-chase}, we derive  finite element strain and stress complexes following a diagram chase. In Section \ref{sec:conclusion}, we provide concluding remarks.
\begin{table}[H]
\begin{tabular}{c|c|c} % <-- Alignments: 1st column left, 2nd middle and 3rd right, with vertical lines in between
{2D Stokes} (2013) \cite{falk2013stokes}&Argyris (1968) \cite{argyris1968tuba} & Falk-Neilan (2013) \cite{falk2013stokes}\\
{2D Stokes } (2018) \cite{christiansen2018generalized}&Clough-Tocher (1965) \cite{clough1965finite} & Arnold-Qin (1992) \cite{arnold1992quadratic}\\
{2D Stokes} (2020) \cite{guzman2020exact}&Powell-Sabin (1977) \cite{zhang2008p1} & Zhang (2008) \cite{zhang2008p1}\\
{2D elasticity } (2002)  \cite{arnold2002mixed} &Argyris (1968) \cite{argyris1968tuba} &   Arnold-Winther (2002) \cite{arnold2002mixed} 
\\
{2D elasticity } (2018) \cite{christiansen2018nodal} & Argyris (1968) \cite{argyris1968tuba} &Hu-Zhang (2014) \cite{hu2014family}
\\
{2D elasticity } (2022) \cite{christiansen2022finite}&Clough-Tocher (1965) \cite{clough1965finite} & Johnson-Mercier (1978) \cite{johnson1978some}\\
{3D Stokes } (2015) \cite{neilan2015discrete}& Z\v{e}n\'{\i}\v{s}ek (1973) \cite{vzenivsek1973polynomial} & Neilan (2015) \cite{neilan2015discrete} \\
{3D Stokes } (2020) \cite{fu2020exact}&Alfeld (1984) \cite{alfeld1984trivariate} & Zhang (2005) \cite{zhang2005new}\\
{3D Stokes } (2022)\cite{guzman2022exact}& Worsey-Farin (1987) \cite{worsey1987ann} & Zhang (2011) \cite{zhang2011quadratic}\\
{3D elasticity } (2020) \cite{christiansen2020discrete}& Alfeld (1984) \cite{alfeld1984trivariate} &   \begin{tabular}{@{}c@{}}Christiansen-Gopalakrishnan\\ \quad \quad-Guzman-Hu (2020) \cite{christiansen2020discrete}\end{tabular}
%Christiansen-Gopalakrishnan -Guzman-Hu (2020) \cite{christiansen2020discrete}\\
    \end{tabular}
    \caption{Complexes connect scalar/vector splines with tensor-valued spaces. The second column of the table includes scalar/vector spline spaces, and the third contains the corresponding tensor-valued spaces. The first column indicates the complexes that contain these spaces. See also \cite{neilan2020stokes} for a review of finite element Stokes complexes and \cite{hu2022oberwolfach} for some BGG complexes. }
    \label{tab:connections}
    \end{table}

\section{Nonlinear complex}\label{sec:nonlinear-complex}

In this section, we present a nonlinear version of the linear elasticity complex.

Equip $\mathbb{R}^{3}$ with a flat metric $g_{0}$. 
Let $\Omega\subset \mathbb{R}^{3}$ be a bounded domain, and $\varphi_{0}: \Omega\to \varphi_{0}(\Omega)\subset \mathbb{R}^{3}$ be a given invertible smooth vector-valued function.   We refer to the following sequence as a nonlinear elasticity complex:
\begin{equation}\label{nonlinear-complex}
\begin{tikzcd}
\mathbb{RM}\arrow{r}{\subset}&C^{\infty}({\Omega}; \mathbb{R}^{3})\arrow{r}{\iota} &\Gamma_{+}(S^{2}T^{\ast}\Omega)   \arrow{r}{\mathrm{Ric}}&\Gamma(S^{2}T^{\ast}\Omega) \arrow{r}{\mathrm{Bian}}&T^{\ast}\Omega \arrow{r}{}&0.
 \end{tikzcd}
\end{equation}
Here $\mathbb{RM}$ is the space of rigid body motions composed with $\varphi_{0}$, i.e., 
$$
\mathbb{RM}:=\{c+Q\cdot \varphi_{0}(x), ~c\in \mathbb{R}^{3}, Q\in SO(3)\}. 
$$
%Note that $\mathbb{RM}$ is a set of maps from $\mathbb{R}^{3}$ to $\mathbb{R}^{3}$, and is therefore contained in $C^{\infty}({\Omega}; \mathbb{R}^{3})$, which is the set of smooth maps from $\Omega$ to $\mathbb{R}^{3}$. 
The space $C^{\infty}({\Omega}; \mathbb{R}^{3})$ has the geometric meaning of embedding of $\Omega$ into $\mathbb{R}^{3}$. Each such map $\varphi$ induces a pullback of metric $\varphi^{\ast}: \Gamma(S^{2}T^{\ast}\mathbb{R}^{3})\to \Gamma(S^{2}T^{\ast}\Omega)$, where $\Gamma(S^{2}T^{\ast}M)$ denotes the section of symmetric $(0, 2)$ tensor fields on a manifold $M$. The operator $\iota$ denotes the change of the metric, i.e., $\iota (\varphi):=(\varphi)^{\ast}g_{0}-(\varphi_{0})^{\ast}g_{0}$, and in coordinate forms, $\iota(\varphi):=D\varphi\cdot g_{0}\cdot (D\varphi)^{t}-D\varphi_{0}\cdot g_{0}\cdot (D\varphi_{0})^{t}$, where $(D\varphi)_{ij}:=\partial_{i}\varphi_{j}$. For any $r\in \mathbb{RM}$, $r$ induces the same map on the metric as $\varphi_{0}$, as $r$ is a composition of $\varphi_{0}$ with a rigid body motion. Then it is readily seen that $\iota (r)=0$ for any $r\in \mathbb{RM}$.  The space $\Gamma_{+}(S^{2}T^{\ast}\Omega) $ is defined by
$$
\Gamma_{+}(S^{2}T^{\ast}\Omega):=\{g\in \Gamma(S^{2}T^{\ast}\Omega): g+(\varphi_{0})^{\ast}g_{0}>0\}.
$$
Here a matrix $g+(\varphi_{0})^{\ast}g_{0}>0$ means that $g+(\varphi_{0})^{\ast}g_{0}$ is positive definite.  In \eqref{nonlinear-complex}, $\mathrm{Ric}$ is the operator mapping a metric to the Ricci curvature. In three dimensions, the Riemannian, Ricci and Einstein tensors encode the same information. Moreover, $\mathrm{Bian}$ is the operator in the (contracted) differential Bianchi identity $R_{ij}\mapsto \nabla^{i}(R_{ij}-\frac{1}{2}Rg_{ij})$, where $R_{ij}$ is the Ricci tensor, $R:=g^{ij}R_{ij}$ is the scalar curvature, and $g_{ij}$ is the metric tensor. Note that $R_{ij}-\frac{1}{2}Rg_{ij}$ is the Einstein tensor. 

In continuum mechanics, $\varphi$ represents a map from the reference configuration $\Omega$ to the current configuration $\varphi(\Omega)$. When $g_{0}=I$ is the Euclidean metric of $\mathbb{R}^{3}$, $\varphi^{\ast}(g_{0})$, or in the matrix form, $D\varphi\cdot (D\varphi)^{t}$, is the right Cauchy-Green tensor \cite[Definition 3.5]{marsden1994mathematical}.

\begin{theorem}
The sequence \eqref{nonlinear-complex} is a complex in the sense that composing two successive operators gives zero.
\end{theorem}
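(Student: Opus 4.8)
The plan is to check the three successive compositions in \eqref{nonlinear-complex} one at a time, reducing each to a standard fact of Riemannian geometry. Before starting I would fix the convention that makes the nonlinear operators well defined: for $g\in\Gamma_+(S^2T^\ast\Omega)$ I write $\bar g:=g+(\varphi_0)^\ast g_0$ for the genuine (positive definite) metric it represents, and I read $\mathrm{Ric}$ as $g\mapsto\mathrm{Ric}(\bar g)$ and $\mathrm{Bian}$ as being built from the Levi-Civita connection and index raising of $\bar g$. With this understood, the claim splits into $\iota\circ(\subset)=0$, $\mathrm{Ric}\circ\iota=0$, and $\mathrm{Bian}\circ\mathrm{Ric}=0$.

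For the first composition I would simply make precise the remark preceding the statement. Writing $r=c+Q\,\varphi_0\in\mathbb{RM}$ with $c\in\mathbb{R}^3$, $Q\in SO(3)$, differentiation gives $Dr=D\varphi_0\,Q^t$, so $Dr\,g_0\,(Dr)^t=D\varphi_0\,(Q^t g_0 Q)\,(D\varphi_0)^t$. As $Q$ is a linear isometry of $(\mathbb{R}^3,g_0)$ the middle factor is $g_0$, hence $r^\ast g_0=\varphi_0^\ast g_0$ and $\iota(r)=0$.

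For the second composition the point is that $\iota(\varphi)+\varphi_0^\ast g_0=\varphi^\ast g_0$ is the pullback of the flat metric $g_0$. Membership of $\iota(\varphi)$ in $\Gamma_+$ forces $\varphi^\ast g_0$ to be positive definite, so $\varphi$ is an immersion of equidimensional manifolds, hence a local diffeomorphism and a local isometry onto $(\mathbb{R}^3,g_0)$. The key step is naturality of curvature under diffeomorphisms, $\mathrm{Riem}(\varphi^\ast g_0)=\varphi^\ast\mathrm{Riem}(g_0)$; since $g_0$ is flat this vanishes, so $\varphi^\ast g_0$ is flat and $\mathrm{Ric}(\iota(\varphi))=\mathrm{Ric}(\varphi^\ast g_0)=0$. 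For the third composition, with $R_{ij}=\mathrm{Ric}(\bar g)_{ij}$ and $R=\bar g^{ij}R_{ij}$, the expression $\mathrm{Bian}(\mathrm{Ric}(\bar g))=\nabla^i(R_{ij}-\tfrac12 R\,\bar g_{ij})$ is exactly the divergence of the Einstein tensor of $\bar g$, which vanishes identically by the twice-contracted second Bianchi identity.

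The computations themselves are routine; the one point that needs care --- and which I would foreground --- is the bookkeeping of the base metric. Because $\Gamma_+$ parametrizes perturbations of the background $\varphi_0^\ast g_0$ rather than metrics directly, the operators $\mathrm{Ric}$ and $\mathrm{Bian}$ are genuinely nonlinear and metric-dependent, and the two classical identities (naturality of curvature, and the second Bianchi identity) only line up to give zero if $\mathrm{Ric}$ and $\mathrm{Bian}$ are consistently referred to the same metric $\bar g$. Once that convention is pinned down, the complex property is immediate.
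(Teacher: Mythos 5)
Your proof is correct and takes essentially the same route as the paper's: $\mathrm{Ric}\circ\iota=0$ because $\iota(\varphi)+(\varphi_0)^{\ast}g_0=\varphi^{\ast}g_0$ is the pullback of the flat metric $g_0$ and hence flat, and $\mathrm{Bian}\circ\mathrm{Ric}=0$ is the (twice-contracted) second Bianchi identity. Your extra bookkeeping --- checking $\iota(r)=0$ for $r\in\mathbb{RM}$ (which the paper handles in the discussion preceding the theorem rather than in the proof) and fixing the convention that $\mathrm{Ric}$ and $\mathrm{Bian}$ are both referred to the metric $g+(\varphi_0)^{\ast}g_0$ --- only makes explicit what the paper leaves implicit.
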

\begin{proof}
We have $\mathrm{Ric}\circ \iota=0$ as $\varphi_{0}$ is a flat metric by assumption, and $\operatorname{Bian}\circ\operatorname{Ric}=0$ is the Bianchi identity. 
\end{proof}

 \begin{remark}
 Another complex related to nonlinear elasticity was investigated in \cite{angoshtari2016hilbert}.  The complex includes the embedding $\varphi$ and the deformation tensor $D \varphi$, and therefore corresponds to a de~Rham complex. The operators in the complex in \cite{angoshtari2016hilbert} are linear. Thus structures of multiplication are not explicitly included in the sequences. This is different from \eqref{nonlinear-complex}, which contains $D \varphi\cdot (D \varphi)^{t}$.
 \end{remark}

In the rest of this section, we discuss exactness of the nonlinear complex and show how it corresponds to some classical results. For simplicity, hereafter we assume that $g_{0}$ is the Euclidean metric, or in coordinates, $g_{0}=I$.

The exactness at index zero (at the space $C^{\infty}({\Omega}; \mathbb{R}^{3})$) states that if $D\varphi\cdot (D\varphi)^{T}-D\varphi_{0}\cdot (D\varphi_{0})^{T}=0$, then $\varphi$ is in $\mathbb{RM}$, i.e., there exists $c, Q$ such that $\varphi=c+Q\varphi_{0}$. This is exactly the rigidity theorem \cite[Theorem 8.7-1]{ciarlet2013linear} which is stated for $C^{1}$ maps and connected open subsets of $\mathbb{R}^{n}$. The connectedness is a natural condition, as in the linear case exactness relies on trivial topology.

The exactness at index one states that for any $g\in \Gamma_{+}(S^{2}T^{\ast}\Omega) $, if the Ricci (Riemann, Einstein) tensor vanishes, then there exists $\varphi_{0}$ such that $g=D\varphi\cdot (D\varphi)^{t}-D\varphi_{0}\cdot (D\varphi_{0})^{t}$. This is equivalent to applying the theorem on ``the existence of an immersion with a prescribed metric tensor'' \cite[Theorem 8.6-1]{ciarlet2013linear} to $g+D\varphi_{0}\cdot (D\varphi_{0})^{t}$. In fact, $g\in \Gamma_{+}(S^{2}T^{\ast}\Omega)$ implies that $g+D\varphi_{0}\cdot (D\varphi_{0})^{t}$ is positive definite. Moreover, $g$ is flat implies that   $g+D\varphi_{0}\cdot (D\varphi_{0})^{T}$ also has vanishing Riemnnian tensor. Then we have all the conditions for applying \cite[Theorem 8.6-1]{ciarlet2013linear}, which asserts that the exactness holds  if $\Omega$ is a simply connected open set in $\mathbb{R}^{n}$ and $g\in C^{2}$ (therefore the embedding $\phi$ in $C^{3}$). This again corresponds to the topological conditions for the exactness of the linear elasticity complex. 
 
As shown above, the exactness of \eqref{nonlinear-complex} at index zero corresponds to the rigidity and at index one is a special case of the fundamental theorem of Riemannian geometry. It is tempting to investigate the exactness at index two. Generally speaking, the question is that assuming that a tensor satisfies the symmetries of a Ricci (Riemannian) tensor, is it a curvature tensor of some metric? However, this is not a well-posed question, as to formulate the Bianchi identity, one needs a metric given beforehand. This issue was discussed in \cite{rendall1989insufficiency}, where the author investigated another possibility of imposing the compatibility condition as ``there {\it exists} some metric $g$, such that $\operatorname{Bian}_{g}\sigma =0$'', where $\operatorname{Bian}_{g}$ is the Bianchi operator defined by $g$.  Nevertheless, with this condition, $\sigma$ may not be the Riemannian tensor of any metric, i.e., the ``exactness'' does not hold.

% In 3D, Riemann $R_{ij, kl}$, Ricci $R_{ik}$, Einstein $G_{ik}:=R_{ik}-\frac{1}{2}Rg_{ik}$, are equivalent. Therefore the $\mathrm{Ric}$ operator in \eqref{nonlinear-complex} can be replaced by versions corresponding to the Einstein tensor and the Riemannian tensor. Then the Bianchi identity has a slightly different form. 

The linearization of \eqref{nonlinear-complex} around the Euclidean metric gives the linear elasticity complex
 \begin{equation}\label{sequence:3Delasticity}
\begin{tikzcd}
\mathrm{RM} \arrow{r}&  C^{\infty}\otimes \mathbb{V} \arrow{r}{{\deff}} &C^{\infty}\otimes  \mathbb{S}  \arrow{r}{\inc} &   C^{\infty}\otimes \mathbb{S} \arrow{r}{\div} & C^{\infty}\otimes \mathbb{V}\arrow{r}&0,
 \end{tikzcd}
\end{equation}
where we write $\mathbb{V}=\mathbb{R}^{3}$ for the space of vectors, and $C^{\infty}\otimes\mathbb{V}=C^{\infty}(\Omega)\otimes \mathbb{V}$ is the space of smooth vector fields.  Similarly, $\mathbb{S}$ and $C^{\infty}\otimes \mathbb{S}$ denote the spaces of $3\x3$ symmetric matrices and smooth matrix fields, respectively. Here $\mathrm{RM}$ is the space of  infinitesimal rigid body motions
$$
\mathrm{RM}:=\{c+K\cdot x, ~c\in \mathbb{R}^{3}, Q\in {SO}(3)\}=\{c+b\times x, ~c, b\in \mathbb{R}^{3}\}. 
$$
The operators in the elasticity complex are the  {deformation} or linearized strain operator $\operatorname{def}=\operatorname{sym}\grad$, the  {incompatibility} operator $\inc =\curl\circ \mathrm{T}\circ\curl$ (where $\mathrm{T}$ denotes the transpose operation and $\curl$ acts on a matrix field by columns), and the column-wise {divergence} operator map matrices to vectors.

\begin{comment}
In the definition of  $\mathbb{RM}$, if $\varphi_{0}(x)=x+\epsilon\tilde{\varphi}_{0}(x)$, where $\epsilon\tilde{\varphi}_{0}(x)$ is a small perturbation, then we may ignore the perturbation term and get the
 infinitesimal rigid body motion
$$
\mathrm{RM}:=\{c+K\cdot x, ~c\in \mathbb{R}^{3}, Q\in \mathfrak{so}(3)\}=\{c+b\times x, ~c, b\in \mathbb{R}^{3}\}. 
$$

 \end{comment}

\section{Finite element diagram chase}\label{sec:diagram-chase}

In this section, we present a diagram chase to construct a finite element version of elasticity complexes in 2D.

Following \cite{arnold2021complexes}, we introduce some notations in 2D.  We use $\R$, $\mathbb{V}$, $\mathbb{M}$, $\mathbb{S}$, and $\mathbb{K}$ to denote the (finite dimensional) spaces of scalars, vectors, matrices, symmetric matrices and skew-symmetric matrices, respectively. That is, $\mathbb{V}:=\mathbb{R}^{2}$, $\mathbb{M}:=\mathbb{R}^{2\times 2}$, and $\mathbb{S}:=\mathbb{R}^{2\times 2}_{\sym}$. The operators $\skw: \M\to \K$ and $\sym:\M\to\S$ are the skew-symmtrization and symmetrization, respectively.
In $\mathbb{R}^{2}$, a skew symmetric matrix can be identified with a scalar. Let $\mskw: \mathbb{R}\to \mathbb{K}$ be this identification, i.e.,  %between skew symmetric matrices and vectors, which in the index form reads $\left (\mskw u\right )^{ij}:= \epsilon^{ijk}u_{k}$. Here $\epsilon^{ijk}$ is the permutation tensor and the Einstein summation is used. 
$$
\mskw(u):= \left ( 
\begin{array}{cc}
0 &u\\
-u & 0
\end{array}
\right )\quad \mbox{in } \mathbb{R}^{2}.
$$
We also let $\sskw=\mskw^{-1}\circ \skw: \mathbb{M}\to \mathbb{R}$ be the map taking the skew part of a matrix and identifying it with a scalar. 

We use $H^{q}$ to denote the Sobolev space for which all derivatives of order less than or equal to $q$ are square integrable. 
For a linear operator $D$, define $H(D):=\{u\in L^{2}: Du\in L^{2}\}$.

\subsection{Stress and strain complexes: an overview}
 In 2D, there are two complexes related to \eqref{sequence:3Delasticity}. Related to the Hellinger-Reissner principle, one has the {\it stress complex}
\begin{equation}\label{2D-stress}
\begin{tikzcd}[column sep=3em]
0 \arrow{r} &H^{2} \arrow{r}{\curl\curl} &H(\div, \S)\arrow{r}{\div}&L^{2}\otimes \mathbb{V}  \arrow{r}{} & 0,
 \end{tikzcd}
\end{equation}
where $H(\div, \S):=\{\sigma\in L^{2}\otimes \mathbb{S}: \div \sigma\in L^{2}\otimes \mathbb{V}\}$. The stress complex may be viewed as a restriction of the last part of the complex \eqref{sequence:3Delasticity} to a two dimensional face (thus $\inc$ becomes $\curl\curl$ and $\div$ remains). The stress complex can be derived from the following diagram:
\begin{equation} \label{stress-diagram}
\begin{tikzcd}[column sep=1.6em]
0 \arrow{r} &H^{2} \arrow{r}{\curl} &H^{1}\otimes \mathbb{V}\arrow{r}{\div}& L^{2}  \arrow{r}{} & 0 \\
0 \arrow{r}&H^{1}\otimes \mathbb{V} \arrow{r}{\curl} \arrow[ur, "I"] &H(\div)\otimes \mathbb{V}\arrow{r}{\div} \arrow[ur, "\mathrm{-2\sskw}"]& L^{2}\otimes \V\arrow{r}{} &0.
 \end{tikzcd}
\end{equation}
Applications for the Hellinger-Reissner principle of elasticity use the last part of the diagram. On the continuous level, the idea of deriving \eqref{2D-stress} from the diagram \eqref{stress-diagram} is to eliminate spaces connected by the algebraic diagonal maps as much as possible from the diagram. For example, we remove the two spaces connected by $I$ and the components connected by $\sskw$ (thus $L^{2}$ is removed from the top row and the symmetric part of $H(\div)\otimes \mathbb{V}$ is left in the bottom row). We refer to \cite{arnold2021complexes,vcap2022bgg} for a precise discussion of the BGG machinery.  

To derive finite elements, we intend to fit finite element spaces in the diagram \eqref{stress-diagram}:
\begin{equation} \label{stress-diagram-h}
\begin{tikzcd}[column sep=1.6em]
0 \arrow{r} &W^{0} \arrow{r}{\curl} &W^{1}\arrow{r}{\div}&W^{2}  \arrow{r}{} & 0 \\
0 \arrow{r}&Y^{0} \arrow{r}{\curl} \arrow[ur, "I"] &Y^{1}\arrow{r}{\div} \arrow[ur, "\mathrm{-2\sskw}"]& Y^{2}\arrow{r}{} &0.
 \end{tikzcd}
\end{equation}
 One first constructs a finite element Stokes complex $W^{\bs}$ (de~Rham complexes with enhanced regularity) for the first row. Then one chooses $Y^{0}$ to be identical to $W^{1}$, so that they are matched precisely by the connecting map $I$.  

 %For most choices of finite element de~Rham complexes $(W^{\bs}, d^{\bs})$ and $(Y^{\bs}, d^{\bs})$, it holds that $\sskw$ maps  $Y^{1}$ to  $W^{2}$, as the latter only requires minimal regularity. Then a diagram chase implies that $\sskw$ is  surjective between the finite element spaces.
  Once we have a diagram with discrete spaces \eqref{stress-diagram-h}, a similar argument as the continuous level leads to a discretization of \eqref{2D-stress}. The spaces in the derived complex are kernels and cokernels of the connecting algebraic maps. The recipe does not immediately implies the degrees of freedom. Nevertheless, one may rewrite the degrees of freedom for $Y^{1}$, identify some of them with the degrees of freedom for $W^{2}$ and set them to zero. The remaining ones give a set of degrees of freedom for symmetric tensors. We refer to \cite{Arnold2006a,christiansen2018nodal} for details of this argument.

Another version of elasticity complexes in 2D, the strain complex 
\begin{equation}\label{strain-complex}
\begin{tikzcd}[column sep=3em]
0 \arrow{r} &H^{2}\otimes \mathbb{V}\arrow{r}{\deff} &H^{1}(\rot, \S)\arrow{r}{\rot\rot}&L^{2}  \arrow{r}{} & 0,
 \end{tikzcd}
\end{equation}
is a restriction of the first several spaces of the 3D complex \eqref{sequence:3Delasticity} to a 2D plane (with higher Sobolev regularity, c.f. \cite[(64)]{christiansen2022finite}).  Here $H^{1}(\rot, \S):=\{u\in H^{1}\otimes \mathbb{S}: \rot u\in H^{1}\otimes \mathbb{V}\}$. The strain complex \eqref{strain-complex} contains displacement and linearized Gauss curvature (see appendix). The complex \eqref{strain-complex} can be derived from the following diagram 
\begin{equation}\label{Z-complex-2D}
\begin{tikzcd}[column sep=1.6em]
0 \arrow{r} &H^{2}\otimes \mathbb{V}\arrow{r}{\grad} &H^{1}(\rot, \M)\arrow{r}{\rot}&H^{1}\otimes \V  \arrow{r}{} & 0 \\
0 \arrow{r}&H^{2} \arrow{r}{\grad} \arrow[ur, "\mskw"] &H^{1}\otimes \V\arrow{r}{\rot} \arrow[ur, "I"]& L^{2}\otimes \V\arrow{r}{} &0,
 \end{tikzcd}
\end{equation}
%The derivation of \eqref{strain-complex} involves taking $q=3$ and $q=2$ in \eqref{Z-complex-2D}. 
%To use only one set of Sobolev spaces, we may replace
where   $H^{1}(\rot, \mathbb{M}):=\{e\in H^{1}\otimes \mathbb{M}: \rot e\in H^{1}\otimes \mathbb{V}\}$. On the discrete level, to fit finite element spaces   in a discrete version of \eqref{Z-complex-2D},
\begin{equation} \label{discrete-strain-diagram}
\begin{tikzcd}[column sep=1.6em]
0 \arrow{r} &Z^{0} \arrow{r}{\grad} &Z^{1}\arrow{r}{\rot}& Z^{2}  \arrow{r}{} & 0 \\
0 \arrow{r}&V^{0} \arrow{r}{\grad} \arrow[ur, "\mskw"] &V^{1}\arrow{r}{\rot} \arrow[ur, "I"]&V^{2}\arrow{r}{} &0.
 \end{tikzcd}
\end{equation}
 we may follow the same idea as for the stress complex \eqref{stress-diagram} by using identical spaces for the two slots connected by $I$, i.e., $V^{1}=Z^{2}$.
 However, the elimination of $V^{0}$ from $Z^{1}$ is more subtle than the stress complex. To see this, we first observe that in   \eqref{stress-diagram-h}, a diagram chase shows that as long as $\sskw$ maps $Y^{1}$ to $W^{2}$ (which is often the case as $W^{2}$ requires minimal regularity)  and both rows are exact, then $\sskw$ is also surjective on the discrete level, i.e., $W^{2}$ precisely corresponds to the skew-symmetric part of $Y^{1}$. %Therefore, for  \eqref{stress-diagram}, constructing precisely matched spaces is easier. 
 Contrarily, 
 the subtlety for \eqref{Z-complex-2D} is that  there is no 
canonical way to choose the two spaces connected by $\mskw$ such that $\mskw V^{0}$ is precisely the skew-symmetric part of $Z^{1}$. Actually, in typical examples, $\mskw$ maps $Z^{0}$ to $V^{1}$, but $V^{0}$ is often too small compared to the skew-symmetric part of $Z^{1}$. 

In fact, this difference between \eqref{stress-diagram-h} and \eqref{discrete-strain-diagram} is already seen on the continuous level in the framework of \cite{arnold2021complexes}. In \cite{arnold2021complexes}, a BGG complex is derived from two input complexes. There exists some $J>0$, such that the connecting maps $S^{j}$ are injective for $j\geq J$ and surjective for $j\geq J$ (therefore bijective for $j=J$) \cite[(19)]{arnold2021complexes}.  The derivation for the two parts follows slightly different forms. For example, the derived operators \cite[(26)]{arnold2021complexes} contain projections to the cokernel for small $j$ (e.g., $\sym\grad=P_{\ran(\mskw)^{\perp}}\grad$), while for large $j$ the operators directly follow from the input complexes (e.g., $\div$).
This partly explains the difference and difficulty in dealing with \eqref{strain-complex} and \eqref{stress-diagram-h} compared to existing results for \eqref{2D-stress} \cite{Arnold2006a,christiansen2018nodal}. 
% on the discrete level, the first space in the second row is often too small compared to the skew-symmetric part of the second space in the first row. Therefore the derived space, as the cokernel of $\mskw$, is only symmetric in a discrete sense. In the rest of this section, we will demonstrate the derived complex and the reduction of degrees of freedom. 

%We remark that the approach in \cite{christiansen2020discrete} is to fix the first and the last spaces in the final strain complex, then directly derive the tensor space in the middle, which is precisely symmetric. The BGG diagram provides inspiration in this context, but the derivation in \cite{christiansen2020discrete} was not to follow the diagram chase precisely. 

\subsection{A discrete stress complex on quadrilateral meshes}

We derive a discrete stress complex from a BGG diagram. The idea is similar to existing results on other types of meshes \cite{Arnold2006a,christiansen2018nodal,christiansen2022finite}. Therefore we will only give a brief outline. Nevertheless, the resulting complex seems to be new in the literature. 

In the construction, we split a quadrilateral by connecting the opposite vertices. This leads to four sub-triangles. All the finite element spaces are defined on this subdivision (referred to as the criss-cross grid; see Figure \ref{fig:JM}).

\begin{center}
\begin{figure}[ht]
\setlength{\unitlength}{1cm}

\begin{picture}(2,4)(3,0)

\put(-1.9, 2.4){$\mathbb{R}$}
\put(-1.6, 2.5){\vector(1, 0){0.4}}

\put(0,2){
%\begin{picture}(2,2)
\put(-1, 0){\circle*{0.1}} 
\put(-0.5, 1){\circle*{0.1}} 
\put(1, 0){\circle*{0.1}} 
\put(1, 1){\circle*{0.1}} 
\put(-1, 0){\circle{0.2}} 
\put(-0.5, 1){\circle{0.2}} 
\put(1, 0){\circle{0.2}} 
\put(1, 1){\circle{0.2}} 
\put(0.25, 1){\vector(0, 1){0.3}}
\put(1, 0.5){\vector(1, 0){0.3}}
\put(-0.75, 0.5){\vector(-1, 0){0.3}}
\put(0, 0){\vector(0, -1){0.3}}

\put(-1, 0){\line(1,2){0.5}} 
%\put(0, 2){\line(1,-2){1}}
\put(-1,0){\line(1,0){2}}
\put(1,0){\line(0,1){1}}
\put(-0.5,1){\line(1,0){1.5}}
\put(-1,0){\line(2, 1){2}} 
\put(-0.5,1){\line(3, -2){1.5}} 
%\end{picture}
}

\put(1.5, 2.5){\vector(1, 0){1}}
\put(1.68, 2.65){$\curl$}

\put(4,2){
\put(-0.95, 0){\circle*{0.1}} 
\put(-1.05, 0){\circle*{0.1}} 
\put(-0.45, 1){\circle*{0.1}} 
\put(-0.55, 1){\circle*{0.1}} 
\put(1.05, 0){\circle*{0.1}} 
\put(0.95, 0){\circle*{0.1}} 
\put(1.05, 1){\circle*{0.1}} 
\put(0.95, 1){\circle*{0.1}} 

\put(-0.1, 0.4){+10} 

\put(0.2, 1){\circle*{0.1}}
\put(0.3, 1){\circle*{0.1}}
\put(1, 0.45){\circle*{0.1}}
\put(1, 0.55){\circle*{0.1}}
\put(-0.77, 0.45){\circle*{0.1}}
\put(-0.73, 0.55){\circle*{0.1}}
\put(-0.05, 0){\circle*{0.1}}
\put(0.05, 0){\circle*{0.1}}

\put(-1, 0){\line(1,2){0.5}} 
%\put(0, 2){\line(1,-2){1}}
\put(-1,0){\line(1,0){2}}
\put(1,0){\line(0,1){1}}
\put(-0.5,1){\line(1,0){1.5}}
\put(-1,0){\line(2, 1){2}} 
\put(-0.5,1){\line(3, -2){1.5}} 
}

\put(5.5, 2.5){\vector(1, 0){1}}
\put(5.75, 2.65){{$\div$}}

\put(8,2){
%\begin{picture}(2,2)
\put(-0.1, 0.4){+11} 

\put(-1, 0){\line(1,2){0.5}} 
%\put(0, 2){\line(1,-2){1}}
\put(-1,0){\line(1,0){2}}
\put(1,0){\line(0,1){1}}
\put(-0.5,1){\line(1,0){1.5}}
\put(-1,0){\line(2, 1){2}} 
\put(-0.5,1){\line(3, -2){1.5}}% \end{picture}
}

\put(1.5, 1.2){\vector(2, 1){1}}
\put(1.7, 1.5){$I$}

\put(-2, 0.4){$\mathbb{R}^{2}$}
\put(-1.6, 0.5){\vector(1, 0){0.4}}
\put(0,0){

%\begin{picture}(2,2)
\put(-0.95, 0){\circle*{0.1}} 
\put(-1.05, 0){\circle*{0.1}} 
\put(-0.45, 1){\circle*{0.1}} 
\put(-0.55, 1){\circle*{0.1}} 
\put(1.05, 0){\circle*{0.1}} 
\put(0.95, 0){\circle*{0.1}} 
\put(1.05, 1){\circle*{0.1}} 
\put(0.95, 1){\circle*{0.1}} 

\put(-0.1, 0.4){+10} 

\put(0.2, 1){\circle*{0.1}}
\put(0.3, 1){\circle*{0.1}}
\put(1, 0.45){\circle*{0.1}}
\put(1, 0.55){\circle*{0.1}}
\put(-0.77, 0.45){\circle*{0.1}}
\put(-0.73, 0.55){\circle*{0.1}}
\put(-0.05, 0){\circle*{0.1}}
\put(0.05, 0){\circle*{0.1}}

\put(-1, 0){\line(1,2){0.5}} 
%\put(0, 2){\line(1,-2){1}}
\put(-1,0){\line(1,0){2}}
\put(1,0){\line(0,1){1}}
\put(-0.5,1){\line(1,0){1.5}}
\put(-1,0){\line(2, 1){2}} 
\put(-0.5,1){\line(3, -2){1.5}} 

%\end{picture}
}

\put(1.5, 0.5){\vector(1, 0){1}}
\put(1.68, 0.65){$\curl$}

\put(5.5, 1.2){\vector(2, 1){1}}
\put(5.7, 1.5){$\mathrm{sskw}$}

\put(4,0){

\put(0, 1){\vector(0, 1){0.3}}
\put(0.05, 1){\vector(0, 1){0.3}}
\put(0.55, 1){\vector(0, 1){0.3}}
\put(0.6, 1){\vector(0, 1){0.3}}

\put(1, 0.75){\vector(1, 0){0.3}}
\put(1, 0.8){\vector(1, 0){0.3}}
\put(1, 0.25){\vector(1, 0){0.3}}
\put(1, 0.2){\vector(1, 0){0.3}}

\put(-0.9, 0.2){\vector(-1, 0){0.3}}
\put(-0.85, 0.25){\vector(-1, 0){0.3}}
\put(-0.7, 0.65){\vector(-1, 0){0.3}}
\put(-0.65, 0.7){\vector(-1, 0){0.3}}

\put(-0.2, 0){\vector(0, -1){0.3}}
\put(-0.15, 0){\vector(0, -1){0.3}}
\put(0.3, 0){\vector(0, -1){0.3}}
\put(0.35, 0){\vector(0, -1){0.3}}
\put(-0.1, 0.4){+16}

\put(-1, 0){\line(1,2){0.5}} 
%\put(0, 2){\line(1,-2){1}}
\put(-1,0){\line(1,0){2}}
\put(1,0){\line(0,1){1}}
\put(-0.5,1){\line(1,0){1.5}}
\put(-1,0){\line(2, 1){2}} 
\put(-0.5,1){\line(3, -2){1.5}} 
}

\put(5.5, 0.5){\vector(1, 0){1}}
\put(5.75, 0.65){{$\div$}}

\put(8,0){
%\begin{picture}(2,2)
\put(-0.1, 0.4){+8} 

\put(-1, 0){\line(1,2){0.5}} 
%\put(0, 2){\line(1,-2){1}}
\put(-1,0){\line(1,0){2}}
\put(1,0){\line(0,1){1}}
\put(-0.5,1){\line(1,0){1.5}}
\put(-1,0){\line(2, 1){2}} 
\put(-0.5,1){\line(3, -2){1.5}}% \end{picture}
}

%\put(9, 1){\vector(1, 0){1}}
\end{picture}
\end{figure}
\end{center}

\begin{center}
\begin{figure}[H]
\setlength{\unitlength}{1cm}
\begin{picture}(2,1)(3,0)
\put(0,0){
\put(-1, 0){\circle*{0.1}} 
\put(-0.5, 1){\circle*{0.1}} 
\put(1, 0){\circle*{0.1}} 
\put(1, 1){\circle*{0.1}} 
\put(-1, 0){\circle{0.2}} 
\put(-0.5, 1){\circle{0.2}} 
\put(1, 0){\circle{0.2}} 
\put(1, 1){\circle{0.2}} 
\put(0.25, 1){\vector(0, 1){0.3}}
\put(1, 0.5){\vector(1, 0){0.3}}
\put(-0.75, 0.5){\vector(-1, 0){0.3}}
\put(0, 0){\vector(0, -1){0.3}}

\put(-1, 0){\line(1,2){0.5}} 
%\put(0, 2){\line(1,-2){1}}
\put(-1,0){\line(1,0){2}}
\put(1,0){\line(0,1){1}}
\put(-0.5,1){\line(1,0){1.5}}
\put(-1,0){\line(2, 1){2}} 
\put(-0.5,1){\line(3, -2){1.5}} 
%\end{picture}
}

\put(1.5, 0.5){\vector(1, 0){1}}
\put(1.3, 0.65){$\curl\curl$}

\put(4,0){

\put(0, 1){\vector(0, 1){0.3}}
\put(0.05, 1){\vector(0, 1){0.3}}
\put(0.55, 1){\vector(0, 1){0.3}}
\put(0.6, 1){\vector(0, 1){0.3}}

\put(1, 0.75){\vector(1, 0){0.3}}
\put(1, 0.8){\vector(1, 0){0.3}}
\put(1, 0.25){\vector(1, 0){0.3}}
\put(1, 0.2){\vector(1, 0){0.3}}

\put(-0.9, 0.2){\vector(-1, 0){0.3}}
\put(-0.85, 0.25){\vector(-1, 0){0.3}}
\put(-0.7, 0.65){\vector(-1, 0){0.3}}
\put(-0.65, 0.7){\vector(-1, 0){0.3}}

\put(-0.2, 0){\vector(0, -1){0.3}}
\put(-0.15, 0){\vector(0, -1){0.3}}
\put(0.3, 0){\vector(0, -1){0.3}}
\put(0.35, 0){\vector(0, -1){0.3}}
\put(-0.1, 0.4){+5}

\put(-1, 0){\line(1,2){0.5}} 
%\put(0, 2){\line(1,-2){1}}
\put(-1,0){\line(1,0){2}}
\put(1,0){\line(0,1){1}}
\put(-0.5,1){\line(1,0){1.5}}
\put(-1,0){\line(2, 1){2}} 
\put(-0.5,1){\line(3, -2){1.5}} 
}

\put(5.5, 0.5){\vector(1, 0){1}}
\put(5.75, 0.65){{$\div$}}

\put(8,0){
\begin{picture}(2,2)
\put(-0.1, 0.4){+8} 

\put(-1, 0){\line(1,2){0.5}} 
%\put(0, 2){\line(1,-2){1}}
\put(-1,0){\line(1,0){2}}
\put(1,0){\line(0,1){1}}
\put(-0.5,1){\line(1,0){1.5}}
\put(-1,0){\line(2, 1){2}} 
\put(-0.5,1){\line(3, -2){1.5}} \end{picture}
}

%\put(9, 1){\vector(1, 0){1}}
\end{picture}
\caption{Diagram chase for a quadrilateral macroelement. We denote the top row by $(W^{\bs}, d^{\bs})$ and the bottom row by $(Y^{\bs}, d^{\bs})$, following the notation in \eqref{stress-diagram-h}.}
\label{fig:JM}
\end{figure}
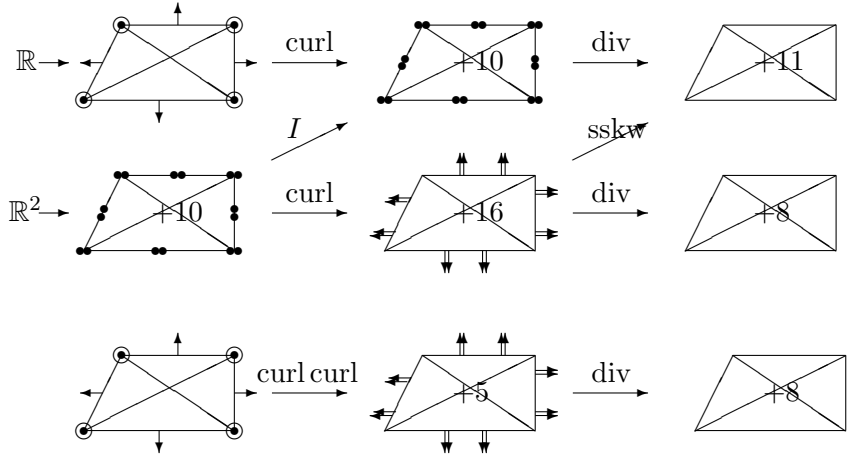
\end{center}

We will use the notation in \eqref{stress-diagram-h} to define the spaces.  The sequence in the top row is studied in \cite{criss-cross}. 
The scalar spline space $W^{0}$ can be found in \cite{lai2007spline}. The space consists of piecewise cubic polynomials with continuous derivatives.  The last two spaces in the first row were discussed by Arnold and Qin \cite{arnold1992quadratic} as a Stokes pair. Specifically, $W^{1}$ consists of continuous piecewise quadratic polynomials. Therefore there are 10 interior degrees of freedom (2 at each of the 5 Lagrange points inside the refined quadrilateral). It is clear that $\div$ maps $W^{1}$ to piecewise linear polynomials, but this map is not surjective. In fact,  for any $u\in W^{1}$, $\div u$ satisfies a relation at the intersection of the two diagonal lines. Figure \ref{fig:z-condition} shows a special configuration where the two lines are the $x$- and $y$-axises, respectively. Then one verifies that 
$$
(\div u|_{T_{1}}-\div u|_{T_{2}}+\div u|_{T_{3}}-\div u|_{T_{4}})(z)=0.
$$
The situation for general configurations where the two lines are not orthogonal is similar.
\begin{figure}[H]
\begin{center}
\includegraphics[width=0.2\linewidth]{./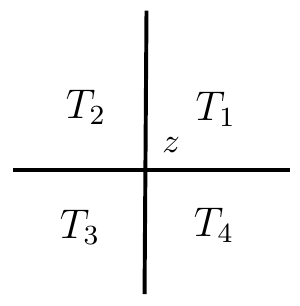}
\end{center}
\caption{A compatibility condition is imposed at each singular vertex $z$. }
\label{fig:z-condition}
\end{figure}
With this constraint, piecewise linear polynomials have dimension 11 on each quadrilateral. %In the context of Stokes problems, the above constraint is related to spurious pressure modes, and 
Vertices joined by two straight lines are referred to as singular vertices \cite{arnold1992quadratic}.
A similar situation appears in the construction of finite elements on the Powell-Sabin split, where two straight lines intersect on edges of the triangulation \cite{guzman2020exact}. 

The second row $(Y^{\bs}, d^{\bs})$ in \eqref{stress-diagram-h} is a vector-valued (two copies of) finite element de~Rham complex consisting of quadratic Lagrange elements, piecewise linear Brezzi-Douglas-Marini (BDM) elements \cite{brezzi1985two}, and piecewise constants on the criss-cross grids. 

Similar to the continuous level, to get the BGG complex, we start from $W^{0}$, and connect the two rows to obtain a second order operator $\curl\curl$. Then we restrict to $\ker(\sskw)\subset Y^{1}$. The degrees of freedom of $\ker(\sskw)$ can be obtained by eliminating the degrees of freedom of $W^{2}$ from $Y^{1}$. More precisely,  all the degrees of freedom on the edges are retained, while we get $5=16-11$ interior degrees of freedom. The finite element stress complex reads
\begin{equation}\label{discrete-stress-comp}
\begin{tikzcd}
0 \arrow{r} & \mathcal{P}_{1} \arrow{r}{\subset} & W^{0} \arrow{r}{\curl\curl} & \ker(\sskw)  \arrow{r}{\div}  &Y^{2} \arrow{r}{} &0.
\end{tikzcd}
\end{equation}
Standard argument with a dimension count implies the following. 
\begin{theorem}
The sequence \eqref{discrete-stress-comp} is a complex. On contractible domains, \eqref{discrete-stress-comp} is exact except for index zero, where $\ker(\curl\curl)=\mathcal{P}_{1}$, the space of linear polynomials. 
\end{theorem}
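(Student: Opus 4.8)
The plan is to establish the two assertions — that \eqref{discrete-stress-comp} is a complex, and that it is exact away from index zero on contractible domains — by a standard finite-element diagram chase on \eqref{stress-diagram-h}, combined with a dimension count. The complex property is essentially automatic: since $W^0 \xrightarrow{\curl} W^1 \xrightarrow{\div} W^2$ is a complex (the top row of Figure \ref{fig:JM}), the composite second-order operator $\curl\curl = \curl \circ \text{(connecting map)} \circ \curl$ inherits $\div \circ \curl\curl = 0$ on the restricted space, and $\rot\rot$ or $\curl\curl$ of a scalar lands in $\ker(\sskw)$ because the Hessian-type object $\curl\curl W^0$ is symmetric by construction. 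I would verify $\div \circ (\curl\curl) = 0$ directly from the diagram: $\curl\curl w$ sits inside $Y^1$ as the image of $\curl w \in W^1 = Y^0$ under the connecting map composed with $\curl$, and applying $\div$ and chasing the square gives zero.

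For exactness, the engine is the abstract BGG / diagram-chase argument already invoked in the text around \eqref{stress-diagram-h}: when both rows are exact de~Rham-type complexes and $\sskw$ maps $Y^1$ \emph{onto} $W^2$ (guaranteed here because $W^2$ has minimal regularity, as noted after \eqref{discrete-strain-diagram}), the surjectivity of the connecting maps transfers to the discrete level, so that $W^2$ is precisely the skew-symmetric part of $Y^1$ and $\ker(\sskw)$ is the correct symmetric replacement. First I would confirm surjectivity of $\sskw : Y^1 \to W^2$ and exactness of both rows of Figure \ref{fig:JM} — the top row is the Arnold--Qin / criss-cross Stokes complex from \cite{arnold1992quadratic,criss-cross} (with the singular-vertex constraint reducing $W^2$ to dimension $11$), and the bottom row is the vector-valued $(\mathcal{P}_2$-Lagrange, BDM$_1$, $\mathcal{P}_0)$ de~Rham complex, whose exactness on contractible domains is classical. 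Then the diagram chase shows $\div(\ker\sskw) = Y^2$ and $\ker(\div|_{\ker\sskw}) = \curl\curl(W^0)$, with $\ker(\curl\curl) = \mathcal{P}_1$.

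To make this watertight I would pin down the dimension count, which both certifies exactness and fixes the interior degrees of freedom. The local (per-macroelement) dimensions are read off Figure \ref{fig:JM}: the top row has interior counts producing $W^2$ of reduced dimension $11$ (after the singular-vertex relation $\div u|_{T_1} - \div u|_{T_2} + \div u|_{T_3} - \div u|_{T_4} = 0$ at $z$), and $\ker(\sskw) \subset Y^1$ acquires $5 = 16 - 11$ interior degrees of freedom by eliminating the $W^2$ degrees of freedom from the $16$ interior ones of $Y^1$. The Euler-characteristic identity $\dim\mathcal{P}_1 - \dim W^0 + \dim(\ker\sskw) - \dim Y^2 = 0$, assembled globally, then forces exactness at each interior index once we know the complex property and one-sided inclusions, because an exact alternating sum together with $\ker \supseteq \Ima$ at every spot upgrades to $\ker = \Ima$.

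\textbf{The main obstacle} I anticipate is the singular-vertex constraint and its bookkeeping. The non-surjectivity of $\div : W^1 \to \mathcal{P}_1^{\mathrm{disc}}$ at the criss-cross center means $W^2$ is \emph{not} the full piecewise-linear space but a codimension-one (per interior crossing) subspace cut out by the compatibility relation at each singular vertex $z$. This constraint must be tracked consistently through the connecting map $\sskw$ so that the count $5 = 16 - 11$ — and hence the surjectivity of $\div$ on $\ker(\sskw)$ and the identification $\ker(\curl\curl) = \mathcal{P}_1$ — is correct; getting the interplay between the local singular-vertex relation and the global assembly exactly right (especially distinguishing the local/macroelement picture of Figure \ref{fig:JM} from the assembled complex) is where the genuine care is needed, the rest being the routine transcription of the BGG chase recorded in \cite{Arnold2006a,christiansen2018nodal}.
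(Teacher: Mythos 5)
Your proposal follows essentially the same route as the paper, whose proof is deliberately terse ("standard argument with a dimension count"): a BGG diagram chase on \eqref{stress-diagram-h} using exactness of both rows and the connecting maps, supplemented by a dimension count. You also correctly isolate the genuine technical point, namely that $\sskw(Y^{1})\subseteq W^{2}$ must be checked against the singular-vertex constraint; this does hold, because the off-diagonal entries $\sigma_{12}$, $\sigma_{21}$ entering $\sskw\sigma$ are each continuous (by BDM normal continuity) across one of the two pairs of opposite diagonal edges meeting at $z$, so the alternating sum of $\sskw\sigma$ over the four sub-triangles vanishes at $z$. With that inclusion, the chase you sketch (surjectivity of $\div$ on $\ker(\sskw)$ by correcting a bottom-row preimage with $\curl$ of a top-row preimage, $\ker(\div)\cap\ker(\sskw)=\curl\curl\,W^{0}$, and $\ker(\curl\curl)=\mathcal{P}_{1}$) is the right argument and matches the paper's intent.

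One step as you state it is false and should not survive into a write-up: "an exact alternating sum together with $\ker\supseteq\Ima$ at every spot upgrades to $\ker=\Ima$." A zero alternating sum of dimensions only forces the alternating sum of cohomology dimensions to vanish; nonzero cohomology at two adjacent indices cancels (e.g.\ $\dim H^{1}=\dim H^{2}=1$ is consistent with a zero Euler sum), so the complex property plus the Euler identity alone proves nothing. The correct use of the dimension count is the one the paper's cited "standard argument" intends: establish exactness at all indices but one by the explicit chase (or by quoting exactness of the two rows), and then the identity $\dim\mathcal{P}_{1}-\dim W^{0}+\dim\ker(\sskw)-\dim Y^{2}=0$, assembled globally with the Euler relation of the mesh, closes the single remaining index. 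Since your preceding paragraph already claims the chase yields $\div(\ker\sskw)=Y^{2}$, $\ker(\div|_{\ker\sskw})=\curl\curl(W^{0})$ and $\ker(\curl\curl)=\mathcal{P}_{1}$ directly, the repair is only to demote the dimension count from a universal exactness principle to this one-index bookkeeping role.
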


 Johnson and Mercier \cite{johnson1978some} constructed a stable finite element elasticity pair on the criss-cross grid on quadrilaterals. In \cite{johnson1978some}, each component of the displacement (corresponding to $Y^{2}$) consists of linear polynomials on quadrilaterals (rather than the refined triangles). Therefore the local shape function space on each quadrilateral has dimension 6. The stress space has the same edge degrees of freedom as $\ker(\sskw)$ in \eqref{discrete-stress-comp}, but has 3 interior degrees of freedom, instead of 5 for $\ker(\sskw)$. The pair in \cite{johnson1978some}  thus does not fit in a complex as derivatives of piecewise functions on a quadrilateral are not a polynomial in general.

\subsection{A new finite element strain complex}

In this section, we construct finite elements fitting in \eqref{discrete-strain-diagram}, and derive the resulting discrete BGG complex for \eqref{strain-complex}. In particular, we discuss a discrete reduction of the skew-symmetric components of the strain tensor. The finite elements for   \eqref{discrete-strain-diagram} are summarized in Figure \ref{fig:strain-diagram}.
\begin{figure}[H]
\begin{center}
\includegraphics[width=0.7\linewidth]{./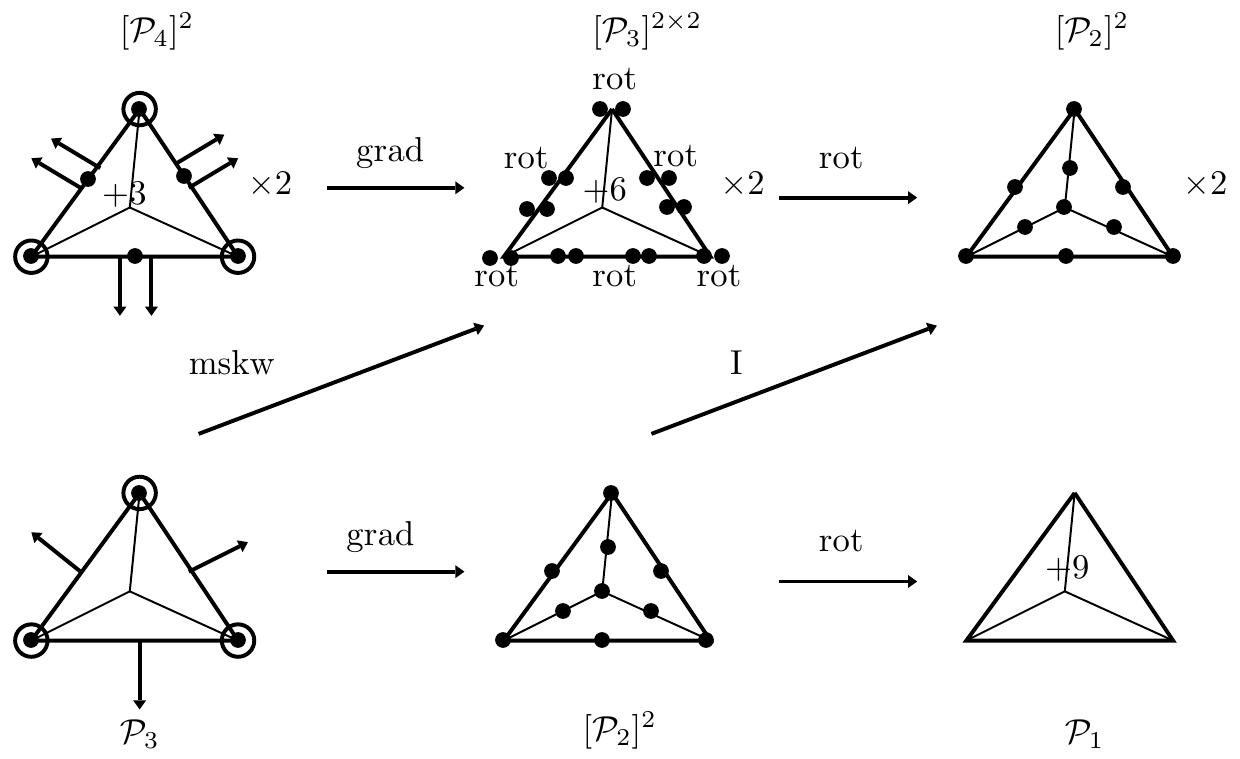}
\end{center}
\caption{Finite element  diagram \eqref{discrete-strain-diagram} for deriving a discrete strain complex. }
\label{fig:strain-diagram}
\end{figure}

All these finite elements will be defined on a Clough-Tocher split of a triangle, which refers to a split where we choose an interior point in the triangle and connect it to the three vertices. Therefore a triangle is split into three smaller ones (c.f. \cite{lai2007spline}). %All the elements in \eqref{discrete-strain-diagram} are defined on a Clough-Tocher split.
 A {Clough-Tocher mesh} consists of triangles with Clough-Tocher splits.

In the discussions below, we will use $\mathcal{P}_{r}(S)$ to denote the space of polynomials of degree less than or equal to $r$ on a domain $S$. Typically, $S$ is a triangle or its edges. We use $\mathcal{T}_{h}$ to denote a Clough-Tocher mesh, and $T\in \mathcal{T}_{h}$ is a macroelement consisting of three triangles. We use $T_{i}\in T_{\mathrm{CT}}, i=1, 2, 3,$ to denote the three sub-triangles of $T$. Moreover,  $\Delta_{k}(\mathcal{T}_{h})$, or simply $\Delta_{k}$, is the set of all $k$-dimensional subsimplicies of $\mathcal{T}_{h}$. If $e\in \Delta_{1}$ is an edge of $\mathcal{T}_{h}$, then $\tau_{e}$ and $n_{e}$ are the unit tangent and normal vectors, respectively. We use $\Delta_{k}(T)$ to denote the set of $k$-simplices in $T$.

The complex $(Z^{\bs}, d^{\bs})$ on the top row of \eqref{discrete-strain-diagram} consists of a Clough-Tocher $H^{2}$ element, an $H^{1}(\rot)$ element, and a Lagrange element on the Clough-Tocher mesh.  The lowest order version of this complex (starting with piecewise cubic polynomials) can be found in \cite[Proposition 3]{christiansen2018generalized}.

{\noindent --} $Z^{0}$. Note that $Z^{0}$ is   vector-valued. Each component of  $Z^{0}$ is the Clough-Tocher $C^{1}$ element with piecewise quartic polynomials. The lowest order of such elements with piecewise cubic polynomials can be found in, e.g., \cite{lai2007spline}, while high order versions on a triangle can be found in \cite{fu2020exact}. The local shape functions on a macroelement $T$ are 
$$
\mathcal{P}_{Z_{0}}(T):=\{u_{h}\in C^{1}(T): u_{h}|_{T_{i}}\in \mathcal{P}_{4}(T_{i})\otimes \mathbb{V},\, T_{i}\subset T_{\mathrm{CT}}, ~ i=1, 2, 3\}.
$$
%where $T_{i}, i=1, 2, 3$ are the three finer triangles of the Clough-Tocher split.

The degrees of freedom for $w\in Z^{0}$ can be given by 
\begin{itemize}
\item   $w(x), \partial_{l} w(x), \quad x\in \Delta_{0}(T), ~l=1, 2$,
\item  $\int_{e}\partial_{n}wq\,ds, \quad q\in \mathcal{P}_{1}(e), ~~e\in \Delta_{1}(T)$,
\item 3 interior degrees of freedom.
\end{itemize}
Here $\partial_{n}w:=n\cdot \grad w$.

{\noindent --} $Z^{1}$. The space $Z^{1}$ consists of two copies of  an $H^{1}(\rot)$-conforming element (continuous fields with continuous $\rot$). The local shape function space for a single copy (each column of the matrix) is 
$$
\mathcal{P}_{Z_{1}}(T):=\{u_{h}\in C^{0}(T):  \rot u_{h}\in C^{0}(T),  u_{h}|_{T_{i}}\in \mathcal{P}_{3}(T_{i})\otimes \mathbb{M},\, T_{i}\subset T_{\mathrm{CT}}, ~ i=1, 2, 3\}.
$$
 The degrees of freedom for $\mathcal{P}_{Z_{1}}(T)$ can be given by the following:
%\begin{itemize}
%\item   $u(x), \rot u(x), \quad x\in \Delta_{0}(T)$,
%\item  $\int_{e}up\, ds, ~\int_{e}\rot uq\,ds, \quad p\in \mathcal{P}_{1}(e), ~ q\in \mathcal{P}_{0}(e), ~~e\in \Delta_{1}(T)$,
%\item 6 interior degrees of freedom.
%\end{itemize}
%We first recall the following degrees of freedom for $Z^{1}$:
\begin{subequations}
\label{dof-Z1}
\begin{alignat}{4}
\label{dof-Z1-1}
u(a), \quad\forall a\in \Delta_{0}(T), \\
\label{dof-Z1-1-1}
\rot u(a), \quad\forall a\in \Delta_{0}(T), \\
\label{dof-Z1-2}
 \int_{e}uq, \quad\forall q\in \mathcal{P}_{1}(e),\quad\forall e\in \Delta_{1}(T),\\
\label{dof-Z1-3}
\int_{e}\rot u\cdot \tau_{e}, \quad \forall e\in \Delta_{1}(T),\\
\label{dof-Z1-3-1}
\int_{e}\rot u\cdot n_{e}, \quad \forall e\in \Delta_{1}(T),\\
\label{dof-Z1-4}
\int_{T} u: q, \quad \forall q\in \mathcal{C}(T),  \, T\in \Delta_{2}(T),
\end{alignat}
\end{subequations}
where $\mathcal{C}(T)$ denotes the bubble space on $T$. 

The concrete form of the interior degrees of freedom is less important in the sense that they do not affect the interelement continuity. The dimension of $Z^{i}, i=0, 1, 2$ and the bubble spaces (the dimension of interior degrees of freedom) can be found in \cite[Section 3]{fu2020exact}.

{\noindent --} $Z^{2}$. The space $Z^{2}$ consists of two copies of the scalar quadratic Lagrange elements on the Clough-Tocher split. 

\begin{theorem}
The degrees of freedom in \eqref{dof-Z1} are unisolvent, and the complex $(Z^{\bs}, d^{\bs})$ consisting of global finite element spaces is exact on contractible domains (except for index 0).
\end{theorem}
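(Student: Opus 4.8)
The plan is to establish the two assertions separately: unisolvence of the functionals \eqref{dof-Z1} on the local space $\mathcal{P}_{Z_1}(T)$, and exactness of the assembled complex $(Z^{\bs},d^{\bs})$. For the exactness it helps to note at the outset that, with the convention $(\grad w)_{ij}=\partial_{i}w_{j}$ used in this paper, the $j$-th column of $\grad w$ is $\grad w_{j}$ and $\rot$ acts column by column; hence the complex splits into two identical copies of a scalar $\grad$--$\rot$ Stokes complex on the Clough-Tocher mesh, built from the $C^{1}$ quartic element, a $\V$-valued $H^{1}(\rot)$ cubic element, and the quadratic Lagrange element. It therefore suffices to analyse a single copy, whose lowest-order ($\mathcal{P}_{3}$) instance is \cite[Proposition 3]{christiansen2018generalized}; I would run the same scheme at the present polynomial degree, importing the local and global dimensions from \cite[Section 3]{fu2020exact}.

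For unisolvence I would first confirm that the number of functionals in \eqref{dof-Z1} equals $\dim\mathcal{P}_{Z_1}(T)$, the latter read off from the supersmooth spline dimensions on the split. It then remains to show that a $u\in\mathcal{P}_{Z_1}(T)$ annihilated by all functionals vanishes. On each edge $e\in\Delta_{1}(T)$ the trace $u|_{e}$ is a matrix-valued cubic, determined by the endpoint values \eqref{dof-Z1-1} and the two moments \eqref{dof-Z1-2}; similarly $\rot u|_{e}$ is a $\V$-valued quadratic, determined by the endpoint values \eqref{dof-Z1-1-1} and the tangential and normal moments \eqref{dof-Z1-3}--\eqref{dof-Z1-3-1}. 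Thus vanishing of the vertex and edge functionals forces $u|_{\partial T}=0$ and $\rot u|_{\partial T}=0$, i.e.\ $u$ lies in the bubble space $\mathcal{C}(T)$. Since the interior functionals \eqref{dof-Z1-4} are precisely the $L^{2}(T)$ pairing against $\mathcal{C}(T)$, choosing the test field to be $u$ itself yields $\int_{T}u:u=0$ and hence $u=0$; this is also why the explicit choice of interior functionals is immaterial for interelement continuity.

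For the exactness I would use the route \emph{complex property, kernel at index zero, surjectivity, dimension count}. First, $\rot\circ\grad=0$ is the classical identity $\rot\grad f=0$ applied to each column, and the regularities are consistent ($\grad$ drops the degree by one and preserves the $C^{0}$ and $C^{0}(\rot)$ continuity), so $(Z^{\bs},d^{\bs})$ is indeed a complex; moreover $\ker(\grad|_{Z^{0}})=\mathcal{P}_{0}\otimes\V$, which accounts for the cohomology at index $0$. Next I would prove that $\rot\colon Z^{1}\to Z^{2}$ is surjective, arguing first locally on a macroelement (polynomial exactness of the split Stokes sequence, with boundary functionals matched) and then globally on a contractible domain, so that $H^{2}=0$. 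Finally, using the global dimensions of \cite[Section 3]{fu2020exact} and the Euler relation of the refined triangulation of a contractible domain, I would check $\dim Z^{0}-\dim Z^{1}+\dim Z^{2}=\dim(\mathcal{P}_{0}\otimes\V)=2$; since the Euler characteristic of the complex equals $\dim H^{0}-\dim H^{1}+\dim H^{2}=2-\dim H^{1}$, this forces $\dim H^{1}=0$, giving exactness except at index $0$.

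The step I expect to be the main obstacle is the surjectivity of $\rot\colon Z^{1}\to Z^{2}$, together with the dimension bookkeeping that feeds the Euler characteristic; both rest on the delicate supersmoothness of the splines at the interior Clough-Tocher vertex (and on the compatibility forced at singular vertices of the split). Once local exactness on the macroelement and the exact global dimensions are in hand, the remaining diagram and dimension deductions are routine.
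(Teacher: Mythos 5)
Your unisolvence argument is essentially the paper's own: vanishing vertex and edge functionals force $u|_{\partial T}=0$ and $\rot u|_{\partial T}=0$, so $u$ is a bubble, and pairing against the bubble space (taking $q=u$) kills it; the dimension bookkeeping comes from \cite[Section 3]{fu2020exact}. No issue there.

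For exactness, however, you invert the paper's logic, and the inversion puts the burden on exactly the step you admit you cannot yet do. You propose to prove surjectivity of $\rot\colon Z^{1}\to Z^{2}$ directly (``locally on a macroelement \dots then globally''), and then recover exactness at index one from the Euler-characteristic count. The paper does the opposite: it proves exactness at index one directly, by a short trick that your proposal misses --- any $u\in Z^{1}$ also lies in the (less regular) N\'ed\'elec space, so $\rot u=0$ gives $u=\grad\phi$ with $\phi$ in the Lagrange space by exactness of the \emph{standard} finite element de~Rham complex; since $u=\grad\phi$ is $C^{0}$, the potential $\phi$ is automatically $C^{1}$, hence $\phi\in Z^{0}$. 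Surjectivity of $\rot$ then falls out of the dimension count $1=\dim Z^{0}-\dim Z^{1}+\dim Z^{2}$ (per copy) with the Euler relation $1=V-E+T$, and never needs to be established by hand. Your route is logically sound --- with $H^{2}=0$ and $H^{0}$ known, the same alternating-sum identity does force $H^{1}=0$ --- but as written it has a genuine gap: gluing local preimages ``with boundary functionals matched'' is not routine (local solutions need not agree on shared edges and vertices; one needs either a commuting interpolation onto $Z^{1}$, or a careful DOF-based global construction, or a citation of the global exactness of the Clough--Tocher Stokes complex in \cite{christiansen2018generalized,fu2020exact} at this polynomial degree). So either fill in that surjectivity argument properly, or --- more economically --- swap the order of your two deductions and use the N\'ed\'elec-inclusion bootstrap, which reduces the hard step to quoting exactness of a standard finite element de~Rham sequence.
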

\begin{proof}
The unisolvency of $Z^{i}$ is a direct consequence of the dimension formulas in \cite[Section 3]{fu2020exact}. In fact, we can show that if all the vertex and edge degrees of freedom vanish,  then the field consists of bubbles in the sense of \cite{fu2020exact}.  Then vanishing integral against bubbles implies that the field actually vanishes.

To show the exactness, we first observe that for any $u\in Z^{1}$, $u$ is also in the N\'ed\'elec space. Then $\rot u=0$ implies that $u=\grad\phi$ for some $\phi$ in the Lagrange space (exactness of standard finite element de~Rham sequence). As $u$ is $C^{0}$, we conclude that $\phi$ is $C^{1}$, and therefore $\phi\in Z^{0}$. The exactness then  follows from a dimension count $1=\dim Z^{0}-\dim Z^{1}+\dim Z^{2}$ with the Euler number $1=V-E+T$, where $V$, $E$ and $T$ are the numbers of vertices, edges and triangles, respectively. 
\end{proof}
The cohomology of the bottom row $(V^{\bs}, d^{\bs})$ can be found in \cite{christiansen2018generalized}.

 The bottom complex in \eqref{discrete-strain-diagram} is the Clough-Tocher finite element Stokes complex \cite[(59)]{christiansen2018generalized}. The first space is a scalar Clough-Tocher $C^{1}$ element, the second is a vector-valued quadratic Lagrange element, and the last consists of piecewise linear polynomials. 
 
 In the rest of this section, we start from \eqref{discrete-strain-diagram} and derive a discrete version of the strain complex 
\begin{equation}\label{reduced-complex}
\begin{tikzcd}[column sep=3em]
0 \arrow{r} &Z^{0}\arrow{r}{\deff_{h}} &U^{1}\arrow{r}{\rot\rot}&V^{2}  \arrow{r}{} & 0.
 \end{tikzcd}
\end{equation}
The first and the last spaces in \eqref{reduced-complex} are directly read from \eqref{discrete-strain-diagram}. The space $U^{1}$ is $Z^{1}$ with reduced symmetry (roughly speaking, obtained by eliminating the $V^{0}$ component from $Z^{1}$).  The key difference compared to the reduction of skew-symmetric components in existing constructions for the stress complex \cite{Arnold2006a,christiansen2018nodal} is that $U^{1}$ is symmetric only in a discrete sense, and $\deff_{h}$ is different from the $\deff$ operator on the continuous level which leads to symmetric fields.  See \eqref{def:U1} and \eqref{def:defh} below for precise definitions.

  We first observe the following identity.
 \begin{lemma}\label{lem:identities}
 We have
$ \rot \skw u=-\grad\sskw u$, and therefore $\rot u=-\grad\sskw u+\rot \sym u$.
 \end{lemma}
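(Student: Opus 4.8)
The plan is to reduce the statement to a single elementary computation, namely $\rot\circ\mskw=-\grad$ on scalar fields, and then invoke linearity of $\rot$ together with the pointwise splitting $u=\sym u+\skw u$. Everything else is bookkeeping.

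First I would pass to coordinates. Writing $u=(u_{ij})$, the definition $\sskw=\mskw^{-1}\circ\skw$ gives $\skw u=\mskw(\sskw u)$, where $\sskw u=\tfrac12(u_{12}-u_{21})$ is a scalar field. This identifies the skew part of $u$ with $\mskw$ applied to a single scalar, so the first claimed identity $\rot\skw u=-\grad\sskw u$ is equivalent to $\rot\bigl(\mskw(s)\bigr)=-\grad s$ for an arbitrary scalar $s$. I would then carry out this one computation directly. Fixing the convention that $\rot$ acts column-wise on a matrix field, with $\rot(c_1,c_2)^{\mathrm{T}}=\partial_1 c_2-\partial_2 c_1$ (the same convention that makes $\rot\circ\grad=0$ in the top row of \eqref{Z-complex-2D}), applying $\rot$ to the two columns $(0,-s)^{\mathrm{T}}$ and $(s,0)^{\mathrm{T}}$ of $\mskw(s)$ yields $(-\partial_1 s,-\partial_2 s)=-\grad s$. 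Substituting $s=\sskw u$ proves the first identity, and the second follows at once: by linearity and $u=\sym u+\skw u$ we get $\rot u=\rot\sym u+\rot\skw u=-\grad\sskw u+\rot\sym u$.

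There is no genuine obstacle here; the only point requiring care is the sign and orientation convention for $\rot$ on vector- and matrix-valued fields, since an inconsistent choice would flip the sign of $\rot\circ\mskw$. I would therefore fix these conventions explicitly at the outset so that $\rot\circ\mskw=-\grad$ comes out with the stated minus sign, and note that all internally consistent normalizations (e.g.\ row-wise versus column-wise $\rot$) reproduce the lemma once this single computation is pinned down.
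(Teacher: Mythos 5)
Your proof is correct and supplies exactly the elementary computation the paper leaves implicit (the lemma is stated there without proof): reducing to $\rot\circ\mskw=-\grad$ via $\skw u=\mskw(\sskw u)$ and linearity is the intended argument, and your column-wise convention for $\rot$ is the one the paper itself relies on later, e.g.\ in \eqref{cond-2} and in the verification $\grad\sskw_{h}\mskw v(a)=-\rot\mskw v(a)=\grad v(a)$. One small caveat: your closing parenthetical is imprecise, since with the paper's fixed definition of $\mskw$ a row-wise $\rot$ would yield $\rot\skw u=+\grad\sskw u$ and flip the sign of the lemma, so the column-wise choice is forced by consistency with the paper's diagram, not interchangeable with the row-wise one.
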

Note that $\sskw(Z^{1})$ is not in $V^{0}$. Inspired by the identities in Lemma \ref{lem:identities}, we define a discrete $\sskw$ operator, $\sskw_{h}: Z^{1}\to V^{0}$, by setting the degrees of freedom of $V^{0}$:
\begin{subequations}
\label{cond}
\begin{alignat}{4}
\label{cond-1}
\sskw_{h}u(a):=\sskw u(a), \quad \forall a\in \Delta_{0},\\
\label{cond-2}
\grad\sskw_{h}u(a):=-\rot u(a), \quad \forall a\in \Delta_{0},\\
\label{cond-3}
\int_{e} \partial_{n}\sskw_{h}u:=-\int_{e}n\cdot \rot u, \quad \forall e\in \Delta_{1}.
\end{alignat}
\end{subequations}
We have $\sskw_{h}\mskw=I$, since
\begin{subequations}
\begin{alignat}{4}
\sskw_{h}\mskw v(a)=\sskw \mskw v(a)=v(a), \quad \forall a\in \Delta_{0},\\
\grad\sskw_{h}\mskw v(a)=-\rot \mskw v(a)=\grad v(a), \quad \forall a\in \Delta_{0},\\
\int_{e} \partial_{n}\sskw_{h}\mskw v=-\int_{e}n\cdot \rot \mskw v=\int_{e}\partial_{n}  v, \quad \forall e\in \Delta_{1},
\end{alignat}
\end{subequations}
i.e., for any $v\in V^{0}$, the degrees of freedom have the same value on $\sskw_{h}\mskw v$ and $v$.

Define the shape function space of $U^{1}$ as the kernel of $\sskw_{h}$ (in $Z^{1}$)
\begin{equation}\label{def:U1}
U^{1}:=\ker(\sskw_{h}, Z^{1}),
\end{equation}
 and 
 \begin{equation}\label{def:defh}
 \deff_{h}:=(I-\mskw\sskw_{h})\grad.
 \end{equation}
  Then we get a sequence
\begin{equation}\label{reduced-complex}
\begin{tikzcd}[column sep=3em]
0 \arrow{r} &Z^{0}\arrow{r}{\deff_{h}} &U^{1}\arrow{r}{\rot\rot}&V^{2}  \arrow{r}{} & 0.
 \end{tikzcd}
\end{equation}
We note that $\deff_{h}Z^{0}\subseteq U^{1}$ since $\sskw_{h}\deff_{h}=\sskw_{h}(I-\mskw\sskw_{h})\grad=0$. Moreover, $\rot\rot\deff_{h}=\rot\rot(I-\mskw\sskw_{h})\grad= 0$, since $\rot\rot\mskw=0$ by a diagram chase on \eqref{discrete-strain-diagram}. So \eqref{reduced-complex} is a complex.

Next, we show the exactness of \eqref{reduced-complex} (except for index zero). First, we observe that $\rot\rot: Z^{1}\to V^{2}$ is surjective, as $\rot: Z^{1}\to Z^{2}$ and $\rot: V^{1}\to V^{2}$ are both so, and $Z^{2}=V^{1}$. The following theorem states the exactness at $U^{1}$. 
\begin{theorem}
$\ker(\rot\rot, U^{1})=\deff_{h}Z^{0}$.
\end{theorem}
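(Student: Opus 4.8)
The plan is to prove the two inclusions separately. The inclusion $\deff_{h}Z^{0}\subseteq\ker(\rot\rot,U^{1})$ is already in hand: it was shown above that $\deff_{h}Z^{0}\subseteq U^{1}$ and that $\rot\rot\,\deff_{h}=0$. The content of the theorem is therefore the reverse inclusion $\ker(\rot\rot,U^{1})\subseteq\deff_{h}Z^{0}$, which I would establish by a diagram chase on \eqref{discrete-strain-diagram}, working on a contractible domain so that both rows are exact (the top row by the preceding theorem, the bottom row by \cite{christiansen2018generalized}).

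Given $u\in U^{1}$ with $\rot\rot u=0$, first note that $\rot u\in Z^{2}$, which we identify with $V^{1}$ through the connecting map $I$. Since $\rot(\rot u)=0$ in $V^{2}$, exactness of the bottom row gives some $\phi\in V^{0}$ with $\grad\phi=\rot u$. Next, using the identity $\rot\mskw=-\grad$ (a special case of Lemma \ref{lem:identities}, since $\sskw\mskw=I$), I would form the corrected field $u+\mskw\phi$ and compute $\rot(u+\mskw\phi)=\rot u-\grad\phi=0$. Exactness of the top row then yields $w\in Z^{0}$ with $\grad w=u+\mskw\phi$, that is, $u=\grad w-\mskw\phi$.

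It remains to identify $\phi$ with $\sskw_{h}\grad w$ so that this expression becomes $\deff_{h}w$. Applying $\sskw_{h}$ to $u=\grad w-\mskw\phi$ and using the already-established $\sskw_{h}\mskw=I$ gives $\sskw_{h}u=\sskw_{h}\grad w-\phi$; since $u\in U^{1}=\ker(\sskw_{h},Z^{1})$, we get $\phi=\sskw_{h}\grad w$. Substituting back, $u=\grad w-\mskw\sskw_{h}\grad w=(I-\mskw\sskw_{h})\grad w=\deff_{h}w$, which closes the argument. The only real subtlety is bookkeeping: one must keep straight the two copies of the $\rot$ operator and the identification $Z^{2}=V^{1}$ used when invoking bottom-row exactness, and one must be sure that $\phi$ lands in $V^{0}$ (not merely in some larger space) so that $\sskw_{h}\mskw=I$ applies — but this is exactly what bottom-row exactness provides. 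Hence no genuine obstacle arises beyond the careful use of the two exact rows together with the commutation relation $\rot\mskw=-\grad$.
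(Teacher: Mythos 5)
Your proof is correct, and its closing step --- applying $\sskw_{h}$ to a decomposition of $u$ and invoking $\sskw_{h}\mskw=I$ to solve for the skew part --- is exactly the paper's own final manipulation. Where you genuinely differ is in how the decomposition is obtained. The paper quotes the exactness of the twisted complex,
\[
\ker(\rot\rot, Z^{1})=\grad Z^{0}+\mskw V^{0},
\]
citing \cite[Proposition 2.3]{christiansen2020discrete}, and then writes $u=\grad\phi+\mskw\psi$ directly. You instead re-derive precisely this statement by an explicit two-row diagram chase on \eqref{discrete-strain-diagram}: bottom-row exactness gives $\phi\in V^{0}$ with $\grad\phi=\rot u$, the commutation relation $\rot\mskw=-\grad$ (correctly extracted from Lemma \ref{lem:identities}) lets you correct $u$ to the $\rot$-free field $u+\mskw\phi$, and top-row exactness then supplies the potential $w\in Z^{0}$. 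Your version is therefore self-contained: it replaces the citation by a proof, at the cost of invoking the exactness of each row separately (both are available in the paper --- the top row from the preceding theorem, the bottom row from the Clough--Tocher Stokes complex of \cite{christiansen2018generalized}) and of checking the side conditions you rightly flag, namely that $\mskw\phi$ lies in $Z^{1}$ so the corrected field stays in the top row, and that $\phi$ lies in $V^{0}$ so that $\sskw_{h}\mskw=I$ applies. What each approach buys: the paper's is more compact and presents the theorem as a formal consequence of twisted-complex exactness plus the discrete splitting $\sskw_{h}$; yours makes visible exactly where contractibility and the exactness of each row enter, and in effect proves the cited proposition at this index rather than assuming it.
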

\begin{proof}
From the exactness of the following complex  (see \cite[Proposition 2.3]{christiansen2020discrete})
\begin{equation}
\begin{tikzcd}[column sep=1.6em]
Z^{0}\arrow{r}{\grad} &Z^{1}\arrow{r}{\rot\rot}&V^{2}  \arrow{r}{} & 0 \\
V^{0} \arrow[ur, "\mskw"]
 \end{tikzcd}
\end{equation}
 we have
$$
\ker(\rot\rot, Z^{1})=\grad Z^{0}+\mskw V^{0}.
$$
This implies that for any $u\in U^{1}$, $\rot\rot u=0$, there exists $\phi\in Z^{0}$, $\psi\in V^{0}$, such that 
\begin{equation}\label{u}
u=\grad\phi+\mskw \psi.
\end{equation} 
Therefore
$$
0=\sskw_{h} u=\sskw_{h}\grad\phi+\sskw_{h}\mskw \psi=\sskw_{h}\grad\phi+ \psi.
$$
Consequently, $\psi=-\sskw_{h}\grad\phi$. Substituting this identity into \eqref{u},
$$
u=\grad\phi-\mskw\sskw_{h}\grad\phi=(I-\mskw\sskw_{h})\grad\phi=\deff_{h}\phi.
$$
\end{proof}
Next, we define the degrees of freedom of $U^{1}$ by eliminating degrees of freedom from $Z^{1}$ that correspond to the skew-symmetric components (eliminating $V^{0}$ from $Z^{1}$). 

The degrees of freedom of $U^{1}$ can be given as follows:
\begin{subequations}
\label{dof-U1}
\begin{alignat}{4}
\label{dof-U1-1}
\sym u(a), \quad\forall a\in \Delta_{0}(T), \\
\label{dof-U1-2}
 \int_{e}uq, \quad\forall q\in \mathcal{P}_{1}(e),\quad\forall e\in \Delta_{1}(T),\\
\label{dof-U1-3}
\int_{e}\rot u\cdot \tau_{e}, \quad \forall e\in \Delta_{1}(T),\\
\label{dof-U1-4}
\int_{T} u: q, \quad \forall q\in \mathcal{C}(T),  \, T\in \Delta_{2}(T). 
\end{alignat}
\end{subequations}

\begin{theorem}
The degrees of freedom in \eqref{dof-U1} are unisolvent for $U^{1}$.
\end{theorem}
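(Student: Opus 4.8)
The plan is to deduce unisolvency of $U^1$ directly from the already-established unisolvency of $Z^1$, by exhibiting the functionals \eqref{dof-U1} together with the functionals eliminated by the kernel condition as a mere reorganization of the full set \eqref{dof-Z1}.

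First I would make precise what the kernel condition $\sskw_h u = 0$ means in terms of the degrees of freedom of $Z^1$. Since $\sskw_h u$ is the element of $V^0$ whose $V^0$-degrees of freedom are prescribed by \eqref{cond}, and since $V^0$ (the scalar Clough--Tocher $C^1$ element) is unisolvent with degrees of freedom exactly $v(a)$, $\grad v(a)$ and $\int_e \partial_n v$, the condition $\sskw_h u = 0$ is equivalent, by unisolvency of $V^0$, to the simultaneous vanishing of $\sskw u(a)$ for all $a \in \Delta_0(T)$, of $\rot u(a)$ for all $a \in \Delta_0(T)$, and of $\int_e n\cdot \rot u$ for all $e \in \Delta_1(T)$. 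Thus $U^1 = \ker(\sskw_h, Z^1)$ is precisely the subspace of $\mathcal{P}_{Z_{1}}(T)$ on which these particular $Z^1$-functionals vanish.

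Next I would observe that the complementary $Z^1$-functionals are exactly \eqref{dof-U1}. Splitting the vertex value \eqref{dof-Z1-1} as $u(a) = \sym u(a) \oplus \sskw u(a)$, the list \eqref{dof-Z1} partitions into those set to zero on $U^1$ (namely $\sskw u(a)$, the values $\rot u(a)$ in \eqref{dof-Z1-1-1}, and the moments $\int_e\rot u\cdot n_e$ in \eqref{dof-Z1-3-1}) and the remaining functionals $\sym u(a)$, $\int_e u\,q$, $\int_e \rot u\cdot\tau_e$ and the interior moments $\int_T u:q$, which are precisely \eqref{dof-U1}. Unisolvency in the injective direction then follows: if $u \in U^1$ has all functionals \eqref{dof-U1} equal to zero, then, combined with the kernel conditions, the entire set \eqref{dof-Z1} vanishes on $u$, so $u = 0$ by the unisolvency of $Z^1$ established above.

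Finally I would confirm the dimension count, so that the vanishing argument yields genuine unisolvency rather than mere injectivity of the functionals. Since $\sskw_h\mskw = I$ (and $\mskw V^0 \subseteq Z^1$, as already used), the map $\sskw_h: Z^1 \to V^0$ is surjective, whence $\dim U^1 = \dim Z^1 - \dim V^0$; on the other hand, the number of eliminated functionals is $3$ per vertex plus $1$ per edge, i.e.\ exactly $\dim V^0$, so that $\#\eqref{dof-U1} = \#\eqref{dof-Z1} - \dim V^0 = \dim U^1$. The only step requiring care is the first one---correctly translating $\sskw_h u = 0$ into the vanishing of the right $Z^1$-functionals---which hinges on $V^0$ carrying no interior degrees of freedom, so that its full degree-of-freedom set coincides with the input data appearing on the right-hand sides of \eqref{cond}; once this identification is secure, the remainder is bookkeeping against the unisolvency of $Z^1$.
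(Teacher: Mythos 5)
Your proposal is correct and takes essentially the same approach as the paper: both arguments show that the vanishing of \eqref{dof-U1} together with the kernel condition $\sskw_{h}u=0$ forces all the functionals of \eqref{dof-Z1} to vanish (hence $u=0$ by unisolvency of $Z^{1}$), and both close with the dimension count $\dim U^{1}=\dim Z^{1}-\dim V^{0}$ matched against the number of eliminated functionals. Your explicit remark that $\sskw_{h}u=0$ is equivalent to the vanishing of the right-hand sides in \eqref{cond-1}--\eqref{cond-3}, because the Clough--Tocher $C^{1}$ space $V^{0}$ has no interior degrees of freedom, is exactly the mechanism the paper uses implicitly, so this is a welcome clarification rather than a deviation.
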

\begin{proof}
First we show that if all the degrees of freedom in \eqref{dof-U1} vanish on $u\in U^{1}$, then $u=0$.  To see this, we will use \eqref{dof-U1} and the fact that $u\in \ker(\sskw_{h})$ to verify that 
 all the degrees of freedom in  \eqref{dof-Z1} (i.e., functionals in \eqref{cond}) vanish on $u$. Then from the unisolvence of  \eqref{dof-Z1} on $Z^{1}$, we obtain $u=0$ since    $U^{1}\subset Z^{1}$. To this end, we verify the following facts:
\begin{itemize}
\item $u(a)=0, \quad\forall a\in \Delta_{0}(T)$,\quad by \eqref{dof-U1-1} and \eqref{cond-1}
\item $\rot u(a)=-\grad\sskw_{h}u(a)=0, \quad\forall a\in \Delta_{0}(T)$, \quad\mbox{by \eqref{cond-2}}
\item $\int_{e}uq=0,\quad \forall q\in \mathcal{P}_{1}(e),\quad\forall e\in \Delta_{1}(T)$, \quad by \eqref{dof-U1-2}
\item $\int_{e}\rot u\cdot \tau_{e}=0, \quad \forall e\in \Delta_{1}(T)$, \quad by \eqref{dof-U1-3}
\item $\int_{e}\rot u\cdot n_{e}=-\int_{e} \partial_{n}\sskw_{h}u=0, \quad \forall e\in \Delta_{1}(T)$, \quad \mbox{by \eqref{cond-3}}
\item interior degrees of freedom vanish by definition. \quad by \eqref{dof-U1-4}
\end{itemize}
By the unisolvence of $Z^{1}$, we get $u=0$.

Then it remains to show that the number of degrees of freedom is equal to the dimension of the local shape function spaces. To this end, we note that $\mskw: V^{0}\to Z^{1}$ in \eqref{discrete-strain-diagram} is injective. Therefore the dimension of the local shape function space of $U^{1}$ satisfies $\dim U^{1}=\dim Z^{1}-\dim V^{0}$. On the other hand, we observe that the number of reduced degrees of freedom of \eqref{dof-U1} from \eqref{dof-Z1} is also equal to $\dim V^{0}$. Therefore $\dim U^{1}$ is equal to the number of the degrees of freedom in \eqref{dof-U1}.

%Then we show that all the dofs in \eqref{dof-U1} are linearly independent for $U^{1}$. To do this, it suffices to construct a dual basis in $U^{1}$ of the dofs in \eqref{dof-U1}. Indeed, we will construct such bases from the dual bases of the dofs of $Z^{1}$.  At each vertex we can  pick out the dual bases of \eqref{dof-Z1-1} which are symmetric.  Then we gather all the dual baes of  \eqref{dof-Z1-2}, \eqref{dof-Z1-3}, and  \eqref{dof-Z1-4}. They give a dual basis of \eqref{dof-U1}  in $U^{1}$ (since $\sskw_{h}$ of all these bases are zero by definition).
\end{proof}

\begin{remark}
From the above construction, $U^{1}$ has $H^{1}(\rot)$-conformity since it is a subspace of $Z^{1}$. However,  the shape functions of $U^{1}$ may not be symmetric. %For example, the dual bases of the edge dofs \eqref{dof-Z1-2} are all in $U^{1}:=\ker(\sskw_{h}, Z^{1})$, but they contain bases which are not symmetric.

%To obtain a complex with symmetric tensors, we may try a direct construction by using the symmetric part of $Z^{1}$ as shape functions of $U^{1}$. However, the dofs seem not easy to get in that case.
\end{remark}

\begin{remark}[strong and weak symmetry]
In the context of the Hellinger-Reissner principle for linear elasticity, strong symmetry refers to finite elements which contain symmetric matrix-valued functions as local shape functions. As the construct of such finite elements is difficult in some circumstances, another approach was proposed where one uses a Lagrange multiplier to impose the symmetry conditions \cite{arnold2007mixed}. This is referred to as the approach of weak symmetry. The BGG diagram  plays a key role in choosing inf-sup stable finite element pairs for weak symmetry \cite{arnold2007mixed}. In this case, the two rows in \eqref{stress-diagram-h} do not match precisely in the sense that $Y^{0}$ is not identical to $W^{1}$. Instead, one chooses $Y^{0}$ to be larger than $W^{1}$ and uses an interpolation to replace $I$ in  \eqref{stress-diagram-h}. The weak symmetry approach is related to the strong symmetry in several ways. First, if the finite elements in a discrete BGG diagram match precisely (e.g., $\sskw Y^{1}=W^{2}$), then implementing the formulation of elasticity with weakly imposed symmetry leads to stress tensors with strong symmetry. This was discovered in \cite{gopalakrishnan2012second} referred to as ``serendipitous exact symmetry''. Second, in the case of ``serendipitous exact symmetry'', on the finite element level, one may further eliminate shape functions and degrees of freedom which correspond to the skew-symmetric components to obtain finite elements with strong symmetry  \cite{arnold2002mixed,christiansen2018nodal}. 

The finite element diagram chase in this paper fits in this picture, as we eliminate shape functions and degrees of freedom on the finite element level. Nevertheless, the strain complex calls for new techniques as the connecting map is injective. We cannot repeat the argument in \cite{arnold2002mixed,christiansen2018nodal} or  \cite{arnold2007mixed} which requires surjectivity. Extending formulations with weakly imposed symmetry to the part of the BGG diagrams where the connecting maps are injective remains open.
\end{remark}

\section{Conclusions}\label{sec:conclusion}

In this paper, we presented a nonlinear version of the elasticity complex. The spaces and operators encode key structures in elasticity theory and Riemannian geometry. The exactness at index zero and one corresponds to a rigidity theorem and a special case of equal-dimensional embedding of Riemannian manifolds. The Korn inequality also has nonlinear versions \cite{ciarlet2006nonlinear,ciarlet2015nonlinear}. Such results were established in the literature with various regularity assumptions, c.f., \cite{ciarlet2013linear,mardare2004isometric}. For the linearized versions, the BGG machinery provides a systematic approach for establishing the corresponding results.   Therefore, as a future direction, it is of interest to investigate insights that the BGG machinery and the linearized theory may bring to the nonlinear case.

% connections between the BGG machinery and the nonlinear complex. 

In the second part of the paper, we constructed finite element elasticity complexes in 2D following a diagram chase. We focused on examples with the lowest order polynomials. The main purpose was to demonstrate the diagram chase with injective connecting maps.  The results may be generalized to higher order polynomials and higher dimensions (c.f., \cite{christiansen2020discrete}).

It is also of interest to investigate the role of nonlinear complexes in computation for nonlinear elasticity and geometric problems, e.g., \cite{angoshtari2017compatible,auricchio2013approximation}. It remains open to fit discrete (finite element) spaces in nonlinear complexes and preserve the exactness (rigidity and embedding theorems at the discrete level). A significant challenge is to clarify the algebraic varieties defined by the operators in the nonlinear complex.

%Dofs in \eqref{dof-Z1} other than the corresponding vertex evaluation will vanish on these bases.  Similarly, we can pick out the symmetric ones of the dual basis of \eqref{dof-Z1-2}.

%take the dual basis of the evaluation of the symmetric part in \eqref{dof-U1}.

 %Let $\{\phi_{i}\}$ be the set of bases of $V^{0}$ that are dual to the dofs of $V^{0}$. Then $\{\mskw\phi_{i}\}$ is a set of linearly indepedent functions in $Z^{1}$.  We can expand it to a set of basis of $Z^{1}$, i.e., $\{\mskw\phi_{i}\}\cup\{\psi_{j}\}$. We can modify each of the functions in $\{\psi_{j}\}$ by adding functions in $\{\phi_{i}\}$ to obtain a new set of functions $\{\tilde{\psi}_{j}\}$, such that $\sskw_{h}\tilde{\phi}_{j}=0, \, \forall j$.  This can be done because $\sskw_{h}\phi_{j}$ is a function in $V^{0}$, and we can add a combination of $\{\phi_{i}\}$ to $\phi_{j}$ such that its $\sskw_{h}$ is zero. Now we claim that 

\section*{Appendix: coordinate forms of linearized operators} 

In this appendix, we present the coordinate (vector/matrix) form of some linearized operators in geometry. In particular, this clarifies the terms in the elasticity complex. Similar formulas can be found in, e.g., \cite{christiansen2011linearization,li2018regge}.

Let  $g_{ij}$ be a Riemannian metric.  The Christoffel symbols have the form
$$
\Gamma_{jk}^{i}=\frac{1}{2}g^{il}\left ( \frac{\partial g_{lj}}{\partial x^{k}}+ \frac{\partial g_{lk}}{\partial x^{j}}-\frac{\partial g_{jk}}{\partial x^{l}}\right ),
$$
and their contracted versions
$$
\Gamma^{i}=\frac{1}{2}g^{jk}g^{il}\left (\frac{\partial g_{lj}}{\partial x^{k}}+\frac{\partial g_{lk}}{\partial x^{j}}-\frac{\partial g_{jk}}{\partial x^{l}} \right ).
$$

%If $g$ is a perturbation of the Euclidean metric, i.e., $g=g_{0}+\epsilon g$
The linearization around the Euclidean metric is thus
\begin{align*}
\Gamma^{i}&\sim \frac{1}{2}\delta^{jk}\delta^{il}\left (\frac{\partial g_{lj}}{\partial x^{k}}+\frac{\partial g_{lk}}{\partial x^{j}}-\frac{\partial g_{jk}}{\partial x^{l}} \right )\\
&=\frac{\partial g_{ij}}{\partial x^{j}}-\frac{1}{2}\frac{\partial g_{j}^{j}}{\partial x^{l}}=\div g-\frac{1}{2}\nabla \tr (g)=\div S^{-1}g.
\end{align*}
This shows that the linearized contracted Christoffel symbol corresponds to the operator $\div S^{-1}$.%, which appears in the momentum constraint of the 3+1 form of the Einstein equations \cite{}.

The linearized Ricci tensor around the Euclidean metric:
$$
R_{ij}=1/2(-\partial_{k}\partial^{k}g_{ij}-\partial_{i}\partial_{j}g_{k}^{k}+\partial_{i}\partial^{k}g_{kj}+\partial_{j}\partial^{k}g_{ik}),
$$
and in terms of vector/matrix notation,
$$
Ric=1/2(-\Delta g-\nabla\nabla \tr (g)+2\deff\div g).
$$
Taking the trace,
$$
R=-\Delta \tr(g)+\div\div g=-\div\div Sg.
$$
This corresponds to the scalar (Hamiltonian) constrain in the linearized Einstein equations \cite[(5.7)]{li2018regge}.
The linearization of the Einstein tensor $G_{ij}:=R_{ij}-1/2Rg_{ij}$ becomes
$$
G_{ij}=1/2\left [-\partial_{k}\partial^{k}g_{ij}-\partial_{i}\partial_{j}g_{k}^{k}+\partial_{i}\partial^{k}g_{kj}+\partial_{j}\partial^{k}g_{ik}+(\partial_{k}\partial^{k}g_{l}^{l}-\partial^{k}\partial^{l}g_{kl})\eta_{ij}\right],
$$
where $\eta_{ij}$ is the Euclidean metric, 
and in terms of vector/matrix notation,
\begin{align}\label{ein-expansion}
G&=1/2[-\Delta g-\hess \tr (g)+2\deff\div g+(\Delta  \tr (g)-\div\div g)I].
\end{align}

In 2D, by a straightforward calculation, we have 
\begin{equation}\label{R}
R=-\div\div Sg=\partial_{1}^{2}g_{22}+\partial_{2}^{2}g_{11}-2\partial_{1}\partial_{2}g_{12}=\rot\rot g.
\end{equation}
In 3D, we have 
$$
G=\frac{1}{2}\inc g,
$$
and $$
Ric=S^{-1}G=1/2S^{-1}\inc g.
$$
From \eqref{ein-expansion}, we have
$$
\tr(G)=\frac{1}{2}[\Delta \tr(g)-\div\div g]. 
$$
We have used the space dimension $n=3$ in the above calculation. Then we have
\begin{equation}\label{ric-expansion}
Ric=S^{-1}G=1/2[-\Delta g-\hess \tr (g)+2\deff\div g+1/2(\Delta  \tr (g)-\div\div g)I]
\end{equation}

\begin{lemma}
In three space dimensions, $R_{ijkl}=\frac{1}{2}G_{ijkl}$, where $G_{ijkl}:=\epsilon_{ij}^{\quad s}\epsilon_{kl}^{ \quad t}(\inc g)_{st}$ is the fourth-order tensor version of $\inc g$ (obtained by identifying each of the two indices of $\inc g$ with a ``skew-symmetric matrix''), and $R_{ijkl}$ is the linearized Riemannian tensor of $g$. Moreover, the linearized Einstein tensor $Ein=1/2\inc g$.
\end{lemma}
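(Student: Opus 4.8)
The plan is to reduce both assertions to index computations performed with the Euclidean background metric, for which raising and lowering indices is trivial and $\epsilon_{ij}{}^{s}=\epsilon_{ijs}$, $\epsilon_{kl}{}^{t}=\epsilon_{klt}$. The two ingredients are the coordinate form of the linearized Riemann tensor, $R_{ijkl}=\tfrac12(\partial_i\partial_k g_{jl}-\partial_i\partial_l g_{jk}-\partial_j\partial_k g_{il}+\partial_j\partial_l g_{ik})$ (in the sign convention compatible with the paper's definition of $\inc$), and the index form of the incompatibility operator. Expanding $\inc=\curl\circ\mathrm{T}\circ\curl$ with $(\curl M)_{ij}=\epsilon_{iab}\partial_a M_{bj}$ yields, for symmetric $g$, the expression $(\inc g)_{st}=\epsilon_{sab}\epsilon_{tcd}\partial_a\partial_c g_{bd}$.

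For the first identity I would substitute this into the definition $G_{ijkl}=\epsilon_{ijs}\epsilon_{klt}(\inc g)_{st}$ and contract the two internal $\epsilon$'s. Using the cyclic property to write $\epsilon_{ijs}=\epsilon_{sij}$ together with the contraction identity $\epsilon_{sij}\epsilon_{sab}=\delta_{ia}\delta_{jb}-\delta_{ib}\delta_{ja}$ (and likewise for the $klt$ pair) collapses $G_{ijkl}$ to the four-term expression $\partial_i\partial_k g_{jl}-\partial_i\partial_l g_{jk}-\partial_j\partial_k g_{il}+\partial_j\partial_l g_{ik}$. Comparing term by term with $2R_{ijkl}$ gives $R_{ijkl}=\tfrac12 G_{ijkl}$. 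The antisymmetry of $G_{ijkl}$ in $[ij]$ and in $[kl]$ is automatic from the $\epsilon$'s, and the pair symmetry $(ij)\leftrightarrow(kl)$ follows from the symmetry of $\inc g$; that these, together with the first Bianchi identity, reproduce exactly the algebraic symmetries of a curvature tensor reflects the fact that in three dimensions $\inc g\mapsto G_{ijkl}$ maps onto the space of algebraic curvature tensors.

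For the ``moreover'' statement I would use that in three dimensions the (linearized) Einstein tensor, denoted $G$ in the appendix, is the double Hodge dual of the Riemann tensor: contracting $R_{ijkl}=\tfrac12 G_{ijkl}$ with $\tfrac14\epsilon_{mij}\epsilon_{nkl}$ and invoking $\epsilon_{mij}\epsilon_{sij}=2\delta_{ms}$ on each pair returns $\tfrac12(\inc g)_{mn}$, that is $\mathrm{Ein}=\tfrac12\inc g$; alternatively one may simply quote the identity $G=\tfrac12\inc g$ already derived in the appendix from \eqref{ein-expansion}.

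The main obstacle is purely bookkeeping: fixing a single consistent sign convention for the Riemann tensor so that it matches the orientation convention implicit in the paper's $\inc$, and keeping the placement of the second derivatives $\partial_a\partial_c g_{bd}$ straight through the two $\epsilon$--$\delta$ contractions. Once one scalar combination and the symmetry type agree, no further antisymmetrization arguments are needed, since both sides are genuine algebraic curvature tensors in three dimensions and therefore coincide as soon as their independent components do.
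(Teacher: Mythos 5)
Your proposal is correct, and for the main identity it is essentially the paper's own argument: both expand $G_{ijkl}=\epsilon_{ij}{}^{s}\epsilon_{kl}{}^{t}(\inc g)_{st}$ by the $\epsilon$--$\delta$ contraction identity to obtain the four-term expression $\partial_i\partial_k g_{jl}-\partial_i\partial_l g_{jk}-\partial_j\partial_k g_{il}+\partial_j\partial_l g_{ik}$ (which the paper writes as $4\partial_{[p}\partial_{[[c}g_{q]d]]}$) and match it against the linearized Riemann tensor; the only difference there is that the paper re-derives that tensor from the linearized Christoffel symbols via $R_{ij}{}^{k}{}_{l}=2\partial_{[i}\Gamma_{j]l}^{k}$, while you quote its standard coordinate form. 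Where you genuinely deviate is the ``moreover'' part: you contract $R_{ijkl}=\frac12 G_{ijkl}$ with $\frac14\epsilon_{mij}\epsilon_{nkl}$ and invoke the three-dimensional fact that the Einstein tensor is the double dual of the Riemann tensor, whereas the paper takes a single trace of the identity to get $R_{qd}=-\frac12\left[(\Delta\tr g-\div\div g)\delta_{qd}-(\inc g)_{qd}\right]$, computes $\tr\inc g=\Delta\tr g-\div\div g$, and then converts Ricci to Einstein with the algebraic map $S^{-1}$. The two routes are algebraically equivalent; yours is shorter but treats the double-dual identity as a black box (with your conventions---Riemann as you wrote it, and Ricci obtained by contracting the first and fourth indices, which is what reproduces the paper's linearized Ricci---the sign does come out as $+\mathrm{Ein}$, so this is consistent), while the paper's version is self-contained epsilon algebra. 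One caveat: your fallback of ``simply quoting $G=\frac12\inc g$ from the appendix'' is not legitimate---that identity is merely asserted in the appendix text, and the lemma you are proving is precisely its justification, so quoting it would be circular.
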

\begin{proof}

\begin{equation*}
\begin{aligned}
G_{pqcd} &=\epsilon_{pq}^{\quad i}\epsilon_{cd}^{\quad j}\epsilon_{iab}\epsilon_{jst}\partial^{a}\partial^{s}g^{bt}\\
&=\left(\delta_{p a} \delta_{qb}-\delta_{pb} \delta_{qa}\right)\left(\delta_{c s} \delta_{d t}-\delta_{ct} \delta_{d s}\right)\partial^{a}\partial^{s}g^{bt} \\
&=\left(\delta_{c s} \delta_{d t}-\delta_{ct} \delta_{d s}\right)\left(\partial_{p} \partial^{s} g_{q}^{t}-\partial_{q}  \partial^{s} g_{p}^{t}\right) \\
&=2\left(\delta_{c s} \delta_{d t}-\delta_{ct} \delta_{d s}\right)\left(\partial_{[p} \partial^{s} g_{q]}^{~~t}\right) \\
&=4\partial_{[p} \partial_{[[c} g_{q]d]]}, 
\end{aligned}
\end{equation*}
where $\partial_{[p} \partial_{[[c} g_{q]d]]}$ denotes the skew-symmetrization with $p$, $q$ and $c$, $d$, respectively.

On the other hand, we have the linearized Riemamian tensor $R_{ij~~~l}^{~~~\,\, k}=2 \partial_{[i} \Gamma_{j]l}^{k}$, and
\begin{align*}
\Gamma_{j l}^{k}&=\frac{1}{2} g^{ks}\left(\frac{\partial{g}_{sj}}{\partial{x}^{l}}+\frac{\partial{g}_{sl}}{\partial{x}^{j}}-\frac{\partial{g}_{jl}}{\partial{x}^{s}}\right)\sim \frac{1}{2}\left(\frac{\partial g^{k}_{~~j}}{\partial x^{l}}+\frac{\partial g^{k}_{~~l}}{\partial x^{j}}-\frac{\partial g_{jl}}{\partial x^{k}}\right) \sim g_{j}^{~~[k} \nabla_{l]}+\frac{1}{2} \frac{\partial g^{k}_{~~l}}{\partial x^{j}}.
\end{align*}
This implies that $R_{ij~~~l}^{~~~\,\, k}=2 \partial_{[i} \Gamma_{j]l}^{k}\sim 2 \partial_{[i} g_{j]}^{~~[[k}\nabla_{l]]}$, i.e., $Riem = 2\mskw\inc g$, where $\mskw$ maps each index of $\inc g$ to two indices corresponding to a skew-symmetric 2-tensor.

Taking trace on the identity $R_{pq, cd}=1/2\epsilon_{pq}^{~~~i}\epsilon^{~~~j}_{cd}\epsilon_{iab}\epsilon_{jst}\partial^{a}\partial^{s}g^{bt}$, we get
%$R_{ij~~l}^{~~~k}=\epsilon_{pq}^{~~~i}\epsilon_{~~d}^{p~~~j} \epsilon_{iab} \epsilon_{jst}\partial^{a}\partial^{s}g^{bt}$.
\begin{align*}
R_{qd}&=-\frac{1}{2}[\epsilon_{pq}^{ \quad i}\epsilon_{~~d}^{p~~j} \epsilon_{iab} \epsilon_{jst}\partial^{a}\partial^{s}g^{bt}]\\
&=-\frac{1}{2}[\left(\delta_{qd} \delta^{ij}-\delta_{q}^{j} \delta_{d}^{i}\right)  \epsilon_{iab} \epsilon_{jst}\partial^{a}\partial^{s}g^{bt}]\\
&= -\frac{1}{2}[ \epsilon_{iab} \epsilon^{i}_{~~st}\partial^{a}\partial^{s}g^{bt}\delta_{qd}-  \epsilon_{dab} \epsilon_{qst}\partial^{a}\partial^{s}g^{bt}]\\
&= -\frac{1}{2}[(\delta_{as}\delta_{bt}-\delta_{at}\delta_{bs})\partial^{a}\partial^{s}g^{bt}\delta_{qd}-  \epsilon_{dab} \epsilon_{qst}\partial^{a}\partial^{s}g^{bt}]\\
&=-\frac{1}{2}[ (\Delta \tr g-\div\div g)I-\inc g]_{qd}.
\end{align*}
We note that 
\begin{align*}
\tr \inc g&=\epsilon^{d}_{~~ab}\epsilon_{dst}\partial^{a}\partial^{s}g^{bt}\\
&=(\delta_{as}\delta_{bt}-\delta_{at}\delta_{bs})\partial^{a}\partial^{s}g^{bt}\\
&=\Delta\tr g-\div\div g.
\end{align*}
This implies that
$$
2Ric=S\inc g,
$$
and 
$$
Ein=S^{-1}Ric=\frac{1}{2}\inc g.
$$
\end{proof}

\section*{Acknowledgement}

The work was supported by a Hooke Research Fellowship and a Royal Society University Research Fellowship (URF$\backslash${\rm R}1$\backslash$221398). The author would like to thank Douglas~N.~Arnold for pointing out the reference  \cite{rendall1989insufficiency}. 
\bibliographystyle{siam}      % mathematics and physical sciences
\bibliography{reference}{}   % name your BibTeX data base

\begin{thebibliography}{10}

\bibitem{alfeld1984trivariate}
{\sc P.~Alfeld}, {\em {A trivariate Clough-Tocher scheme for tetrahedral
  data}}, Computer Aided Geometric Design, 1 (1984), pp.~169--181.

\bibitem{angoshtari2017compatible}
{\sc A.~Angoshtari, M.~F. Shojaei, and A.~Yavari}, {\em {Compatible-strain
  mixed finite element methods for 2D compressible nonlinear elasticity}},
  Computer Methods in Applied Mechanics and Engineering, 313 (2017),
  pp.~596--631.

\bibitem{angoshtari2016hilbert}
{\sc A.~Angoshtari and A.~Yavari}, {\em Hilbert complexes of nonlinear
  elasticity}, Zeitschrift f{\"u}r angewandte Mathematik und Physik, 67 (2016),
  pp.~1--30.

\bibitem{argyris1968tuba}
{\sc J.~H. Argyris, I.~Fried, and D.~W. Scharpf}, {\em {The TUBA family of
  plate elements for the matrix displacement method}}, The Aeronautical
  Journal, 72 (1968), pp.~701--709.

\bibitem{arnold2007mixed}
{\sc D.~Arnold, R.~Falk, and R.~Winther}, {\em Mixed finite element methods for
  linear elasticity with weakly imposed symmetry}, Mathematics of Computation,
  76 (2007), pp.~1699--1723.

\bibitem{arnold2018finite}
{\sc D.~N. Arnold}, {\em Finite element exterior calculus}, SIAM, 2018.

\bibitem{Arnold2006a}
{\sc D.~N. Arnold, R.~S. Falk, and R.~Winther}, {\em {Differential complexes
  and stability of finite element methods II: The elasticity complex}},
  Compatible spatial discretizations,  (2006), pp.~47--67.

\bibitem{Arnold.D;Falk.R;Winther.R.2006a}
{\sc D.~N. Arnold, R.~S. Falk, and R.~Winther}, {\em {Finite element exterior
  calculus, homological techniques, and applications}}, Acta numerica, 15
  (2006), p.~1.

\bibitem{Arnold.D;Falk.R;Winther.R.2010a}
\leavevmode\vrule height 2pt depth -1.6pt width 23pt, {\em {Finite element
  exterior calculus: from Hodge theory to numerical stability}}, Bulletin of
  the American Mathematical Society, 47 (2010), pp.~281--354.

\bibitem{arnold2021complexes}
{\sc D.~N. Arnold and K.~Hu}, {\em Complexes from complexes}, Foundations of
  Computational Mathematics,  (2021), pp.~1--36.

\bibitem{arnold1992quadratic}
{\sc D.~N. Arnold and J.~Qin}, {\em {Quadratic velocity/linear pressure Stokes
  elements}}, Advances in computer methods for partial differential equations,
  7 (1992), pp.~28--34.

\bibitem{arnold2002mixed}
{\sc D.~N. Arnold and R.~Winther}, {\em {Mixed finite elements for
  elasticity}}, Numerische Mathematik, 92 (2002), pp.~401--419.

\bibitem{auricchio2013approximation}
{\sc F.~Auricchio, L.~Beir{\~a}o~da Veiga, C.~Lovadina, A.~Reali, R.~L. Taylor,
  and P.~Wriggers}, {\em Approximation of incompressible large deformation
  elastic problems: some unresolved issues}, Computational Mechanics, 52
  (2013), pp.~1153--1167.

\bibitem{brezzi1985two}
{\sc F.~Brezzi, J.~Douglas~Jr, and L.~D. Marini}, {\em {Two families of mixed
  finite elements for second order elliptic problems}}, Numerische Mathematik,
  47 (1985), pp.~217--235.

\bibitem{bru1992hilbert}
{\sc J.~Br\"uning and M.~Lesch}, {\em Hilbert complexes}, Journal of Functional
  Analysis, 108 (1992), pp.~88--132.

\bibitem{vcap2022bgg}
{\sc A.~{\v{C}}ap and K.~Hu}, {\em {BGG sequences with weak regularity and
  applications}}, arXiv preprint arXiv:2203.01300,  (2022).

\bibitem{vcap2001bernstein}
{\sc A.~{\v{C}}ap, J.~Slov{\'a}k, and V.~Sou{\v{c}}ek}, {\em
  {Bernstein-Gelfand-Gelfand sequences}}, Annals of Mathematics, 154 (2001),
  pp.~97--113.

\bibitem{christiansen2011linearization}
{\sc S.~H. Christiansen}, {\em {On the linearization of Regge calculus}},
  Numerische Mathematik, 119 (2011), pp.~613--640.

\bibitem{christiansen2020discrete}
{\sc S.~H. Christiansen, J.~Gopalakrishnan, J.~Guzm{\'a}n, and K.~Hu}, {\em {A
  discrete elasticity complex on three-dimensional Alfeld splits}}, arXiv
  preprint arXiv:2009.07744,  (2020).

\bibitem{christiansen2018nodal}
{\sc S.~H. Christiansen, J.~Hu, and K.~Hu}, {\em {Nodal finite element de Rham
  complexes}}, Numerische Mathematik, 139 (2018), pp.~411--446.

\bibitem{christiansen2018generalized}
{\sc S.~H. Christiansen and K.~Hu}, {\em {Generalized finite element systems
  for smooth differential forms and Stokes' problem}}, Numerische Mathematik,
  140 (2018), pp.~327--371.

\bibitem{christiansen2022finite}
\leavevmode\vrule height 2pt depth -1.6pt width 23pt, {\em Finite element
  systems for vector bundles: elasticity and curvature}, Foundations of
  Computational Mathematics,  (2022), pp.~1--52.

\bibitem{ciarlet2013linear}
{\sc P.~G. Ciarlet}, {\em Linear and nonlinear functional analysis with
  applications}, vol.~130, Siam, 2013.

\bibitem{ciarlet2006nonlinear}
{\sc P.~G. Ciarlet, L.~Gratie, and C.~Mardare}, {\em {A nonlinear Korn
  inequality on a surface}}, Journal de math{\'e}matiques pures et
  appliqu{\'e}es, 85 (2006), pp.~2--16.

\bibitem{ciarlet2015nonlinear}
{\sc P.~G. Ciarlet and C.~Mardare}, {\em {Nonlinear Korn inequalities}},
  Journal de Math{\'e}matiques Pures et Appliqu{\'e}es, 104 (2015),
  pp.~1119--1134.

\bibitem{clough1965finite}
{\sc R.~W. Clough and J.~L. Tocher}, {\em Finite element stiffness matrices for
  analysis of plates in bending}, Proceedings of the Conference on Matrix
  Methods in Structural Mechanics,  (1965), pp.~515--545.

\bibitem{eastwood2000complex}
{\sc M.~Eastwood}, {\em A complex from linear elasticity}, in Proceedings of
  the 19th Winter School" Geometry and Physics", Circolo Matematico di Palermo,
  2000, pp.~23--29.

\bibitem{eastwood1999variations}
{\sc M.~G. Eastwood}, {\em {Variations on the de Rham complex}}, Notices AMS,
  46 (1999), pp.~1368--1376.

\bibitem{falk2013stokes}
{\sc R.~S. Falk and M.~Neilan}, {\em Stokes complexes and the construction of
  stable finite elements with pointwise mass conservation}, SIAM Journal on
  Numerical Analysis, 51 (2013), pp.~1308--1326.

\bibitem{fu2020exact}
{\sc G.~Fu, J.~Guzm{\'a}n, and M.~Neilan}, {\em {Exact smooth piecewise
  polynomial sequences on Alfeld splits}}, Mathematics of Computation, 89
  (2020), pp.~1059--1091.

\bibitem{gopalakrishnan2012second}
{\sc J.~Gopalakrishnan and J.~Guzm{\'a}n}, {\em A second elasticity element
  using the matrix bubble}, IMA Journal of Numerical Analysis, 32 (2012),
  pp.~352--372.

\bibitem{guzman2020exact}
{\sc J.~Guzm{\'a}n, A.~Lischke, and M.~Neilan}, {\em {Exact sequences on
  Powell--Sabin splits}}, Calcolo, 57 (2020), pp.~1--25.

\bibitem{guzman2022exact}
{\sc J.~Guzm{\'a}n, A.~Lischke, and M.~Neilan}, {\em {Exact sequences on
  Worsey--Farin splits}}, Mathematics of Computation,  (2022).

\bibitem{hu2014family}
{\sc J.~Hu and S.~Zhang}, {\em A family of conforming mixed finite elements for
  linear elasticity on triangular grids}, arXiv preprint arXiv:1406.7457,
  (2014).

\bibitem{hu2022oberwolfach}
{\sc K.~Hu}, {\em {Oberwolfach report: Discretization of Hilbert complexes}},
  arXiv preprint arXiv:2208.03420,  (2022).

\bibitem{criss-cross}
{\sc K.~Hu, J.~Sun, and Q.~Zhang}, {\em Quadratic and cubic {L}agrange finite
  elements for mixed formulation of {L}aplace eigenvalue problem on criss-cross
  meshes}, in preparation,  (2023).

\bibitem{johnson1978some}
{\sc C.~Johnson and B.~Mercier}, {\em Some equilibrium finite element methods
  for two-dimensional elasticity problems}, Numerische Mathematik, 30 (1978),
  pp.~103--116.

\bibitem{lai2007spline}
{\sc M.-J. Lai and L.~L. Schumaker}, {\em {Spline functions on
  triangulations}}, no.~110, Cambridge University Press, 2007.

\bibitem{li2018regge}
{\sc L.~Li}, {\em {Regge finite elements with applications in solid mechanics
  and relativity}}, PhD thesis, University of Minnesota, 2018.

\bibitem{mardare2004isometric}
{\sc S.~Mardare}, {\em {On isometric immersions of a Riemannian space with
  little regularity}}, Analysis and applications, 2 (2004), pp.~193--226.

\bibitem{marsden1994mathematical}
{\sc J.~E. Marsden and T.~J. Hughes}, {\em Mathematical foundations of
  elasticity}, Courier Corporation, 1994.

\bibitem{neilan2015discrete}
{\sc M.~Neilan}, {\em {Discrete and conforming smooth de Rham complexes in
  three dimensions}}, Mathematics of Computation,  (2015).

\bibitem{neilan2020stokes}
\leavevmode\vrule height 2pt depth -1.6pt width 23pt, {\em {The Stokes complex:
  A review of exactly divergence-free finite element pairs for incompressible
  flows}}, in 75 Years of Mathematics of Computation: Symposium on Celebrating
  75 Years of Mathematics of Computation, November 1-3, 2018, the Institute for
  Computational and Experimental Research in Mathematics (ICERM), vol.~754,
  American Mathematical Soc., 2020, p.~141.

\bibitem{pauly2020elasticity}
{\sc D.~Pauly and W.~Zulehner}, {\em The elasticity complex: compact embeddings
  and regular decompositions}, arXiv preprint arXiv:2001.11007,  (2020).

\bibitem{rendall1989insufficiency}
{\sc A.~D. Rendall}, {\em {Insufficiency of the Ricci and Bianchi identities
  for characterising curvature}}, Journal of Geometry and Physics, 6 (1989),
  pp.~159--163.

\bibitem{worsey1987ann}
{\sc A.~Worsey and G.~Farin}, {\em {An n-dimensional Clough-Tocher
  interpolant}}, Constructive Approximation, 3 (1987), pp.~99--110.

\bibitem{vzenivsek1973polynomial}
{\sc A.~{\v{Z}}en{\'\i}{\v{s}}ek}, {\em Polynomial approximation on
  tetrahedrons in the finite element method}, Journal of Approximation Theory,
  7 (1973), pp.~334--351.

\bibitem{zhang2005new}
{\sc S.~Zhang}, {\em {A new family of stable mixed finite elements for the 3D
  Stokes equations}}, Mathematics of computation, 74 (2005), pp.~543--554.

\bibitem{zhang2008p1}
\leavevmode\vrule height 2pt depth -1.6pt width 23pt, {\em {On the P1
  Powell-Sabin divergence-free finite element for the Stokes equations}},
  Journal of Computational Mathematics,  (2008), pp.~456--470.

\bibitem{zhang2011quadratic}
\leavevmode\vrule height 2pt depth -1.6pt width 23pt, {\em {Quadratic
  divergence-free finite elements on Powell--Sabin tetrahedral grids}},
  Calcolo, 48 (2011), pp.~211--244.

\end{thebibliography}

\end{document}